\numberwithin{equation}{section}%1?Ёє?ЎгЎд???ЁІЎАЁ¤o?
\numberwithin{figure}{section}
\newcommand*{\mailto}[1]{\href{mailto:#1}{\nolinkurl{#1}}}
\newtheorem{theorem}{Theorem}
\newtheorem{lemma}{Lemma}
\theoremstyle{proposition}
\newtheorem{proposition}{Proposition}
\newtheorem{ip}{Inverse Problem}
\newtheorem{algorithm}{Algorithm}
\begin{document}

\thispagestyle{empty}

\noindent{\large\bf An inverse  problem for the  matrix Schr\"{o}dinger operator on the half-line with a general boundary condition}
\\

\noindent {\bf Xiao-Chuan Xu}\footnote{School of Mathematics and Statistics, Nanjing University of Information Science and Technology, Nanjing, 210044, Jiangsu,
People's Republic of China, {\it Email:
xcxu@nuist.edu.cn}}
\noindent {\bf and Yi-Jun Pan}\footnote{School of Mathematics and Statistics, Nanjing University of Information Science and Technology, Nanjing, 210044, Jiangsu,
People's Republic of China, {\it Email: yj00262024@163.com
}}
\\

\noindent{\bf Abstract.}
{In this work, we study the inverse spectral problem, using the Weyl matrix as the input data, for the  matrix Schr\"{o}dinger operator on the half-line with the boundary condition being the form of the most general self-adjoint. We prove the uniqueness theorem, and derive the main equation and prove its solvability, which yields a theoretical reconstruction algorithm of the inverse problem.}

\medskip
\noindent {\it Keywords:}{
Matrix Schr\"{o}dinger operator; Inverse spectral problem; Weyl matrix; Quantum graph}

\medskip
\noindent{\it 2020 Mathematics Subject Classification:} 34A55, 34L25, 34L40

\section{Introduction}
Consider the matrix Schr\"{o}dinger equation on the half-line
  \begin{equation}\label{k1}
\ell Y:=-Y''+Q(x)Y=\lambda Y ,\quad x>0,
  \end{equation}
with the boundary condition
\begin{equation}\label{k2}
 T(Y):= A( Y'(0) -hY (0)) - A^\perp Y (0) = 0_n,
\end{equation}
where $Q(x)=(Q_{ls})_{n\times n}$ is an $n\times n$ matrix-valued function in $L^1(0,\infty)$,
namely,
\begin{equation*}
\int_0^\infty \|Q(x)\|dx<\infty,\quad \|Q(x)\|:=\max_{l}\sum_{s=1}^n|Q_{ls}(x)|,
\end{equation*}
the function $Y$ is  column vector-valued or  matrix-valued, $A$ is an orthogonal projection matrix (i.e., $A^\dag =A=A^2$), $ A^\perp=I-A$, $h\in \mathbb{C}^{n\times n}$  satisfies $h=AhA=Ah=hA$, and $0_n$ denotes the zero vector or zero matrix. Denote \eqref{k1} and \eqref{k2} by the problem $L(Q,A,h)$. If $Q(x)^\dag=Q(x)$ and $h^\dag=h$, then the problem $L(Q,A,h)$ is self-adjoint and \eqref{k2} is  the most general self-adjoint boundary condition.

The matrix Schr\"{o}dinger equation
 with the most general self-adjoint boundary condition  appear in quantum mechanics, electronics, nanoscience and other branches of science and engineering and have important applications in quantum mechanics involving particles of internal structures such as spins, scattering on graphs and quantum wires, as well as the matrix-valued integrable system (see \cite{BCFK,SBF,MH1,NW,KS,KS1,KS2,EKKST,PK,PK1,ZS} and the references therein). In particular, various connection conditions  at a vertex of quantum graph can be expressed in terms of the most general  self-adjoint boundary condition (see, e.g., \cite{PK}). For example, the $\delta$-type boundary condition
\begin{equation*}
  y_1(0)=\cdot \cdot\cdot=y_n(0),\quad \sum_{j=1}^n y_j'(0)=ay_1(0)
\end{equation*}
can be expressed by \eqref{k2} with $h=aA$ and $A=(a_{ls})_{n\times n}$, $a_{ls}=1/n$, $l,s=1,...n$ (see \cite{NB0}).

For the  matrix Schr\"{o}dinger equation \eqref{k1} on the half line,  the direct and inverse scattering problems were studied in \cite{AM,AWD,AKW,AKW1,AW,AW1,MH3,XY,XB} and other works, where %the boundary condition is one  of the most general self-adjoint forms, and
the potential matrix is self-adjoint and satisfies  $(1+x)Q(x)\in L^1(0,\infty)$.
The direct and inverse spectral problems on the half line, using the Weyl matrix as the input data, were studied in \cite{FY,NB2}, where $Q\in L^1(0,\infty)$ but the boundary condition is the Robin boundary condition
\begin{equation}\label{xj1}
    Y'(0)-hY(0)=0_n.
\end{equation}

 Motivated by these mentioned works of the  matrix Schr\"{o}dinger operator on the half line, we are interested in studying the inverse spectral problem, using the Weyl matrix as the input data, for  the matrix Schr\"{o}dinger equation \eqref{k1} with the most general self-adjoint boundary condition \eqref{k2}, namely, the problem $L(Q,A,h)$. %Here, we choose the form \eqref{k2}, which may be more convenient in revealing the properties of spectral data.
In this paper, we consider a more general case, that is, we  don't require $h=h^\dagger$ or $Q(x)^\dag=Q(x)$. % We generalize the results in \cite{FY} into the  problem $L(Q,A,h)$.
 When $A=I_n$, it becomes the problem considered in \cite{FY}. When $A$ is a zero matrix, then  \eqref{k2} is the Dirichlet boundary condition.
  Based on partial ideas from \cite{FY} (the method of spectral mapping), we overcome the difficulties caused by the general boundary condition \eqref{k2}, and prove that the Weyl matrix uniquely recovers the matrices $A,h$ and the potential $Q(x)$. Moveover, after deducing some important estimates, we establish the main equation and show its solvability in a constructed Banach space, which yields a a theoretical reconstruction
algorithm for the inverse problem.

Let us also mention that some aspects of the matrix Schr\"{o}dinger operator on the full line were investigated in \cite{AKW0,NB,Dv}, and  the matrix Schr\"{o}dinger operators on a finite interval were studied in  \cite{RC,CK,CK1,CY,VY,NB0,NB3,NB4,X0,Sh} and other works, in particular,  the most self-adjoint boundary condition were considered in \cite{NB0,NB3,NB4,X0}.
It should be pointed out that in addition to \eqref{k2}, there are some other types of the most self-adjoint boundary conditions, such as
 \begin{equation}\label{xc1}
    A_1^\dagger Y'(0)-B_1^\dagger Y(0)=0_n,
\end{equation}
where $A_1$ and $B_1$ satisfy one of the following three conditions
\begin{equation}\label{xc3}
B_1^\dag A_1=A_1^\dag B_1, \quad A_1^\dagger A_1 +B_1^\dagger B_1>0,
\end{equation}
\begin{equation}\label{oo1}
 B_1^\dag A_1=A_1^\dag B_1, \quad {\rm rank}\left[-B_1^\dag, A_1^\dag\right]=n,
\end{equation}
 \begin{equation}\label{xc2}
 A_1=\frac{1}{2} (U+I_{n}), B_1=\frac{i}{2} (U-I_{n}),
 \end{equation}
 where $U$ is a unitary matrix. The condition \eqref{xc3} was considered by Aktosun \emph{et al} \cite{AWD,AKW,AKW1,AW,AW1}. The condition \eqref{oo1} appears in \cite{KS}, and the condition \eqref{xc2} was studied in \cite{MH3,XY,XB}. It was proved in \cite{AKW}  that \eqref{xc3}, \eqref{oo1} and \eqref{xc2} are equivalent to each other. The  most self-adjoint boundary condition of the form \eqref{k2} was recently studied by Bondarenko \cite{NB0,NB3,NB4} in studying the matrix Schr\"{o}dinger operator on a finite interval. In Appendix, we analyse how to transform the condition \eqref{xc1} with \eqref{xc2} into the form of \eqref{k2}.

The paper is organized as follows. In Section 2, we introduce four matrix solutions of the matrix  Schr\"{o}dinger equation, and define the Weyl matrix $M(\lambda)$ and study its properties. In Section 3, we prove a uniqueness theorem for the inverse problem.  In Section 4, we establish the main equation and show its solvability, which yields a theoretical algorithm for
the inverse problem. %In the last section, we consider the inverse problem on a simple star graph as an example of the problem $L(Q,A,h)$.

\section{ The Weyl matrix}
In this section, we introduce the Weyl matrix and give its properties. Let $\lambda=\rho^2$ and $\Omega:=
 \left\{\rho:{\rm Im}\rho\ge 0 , \rho \ne 0 \right\}.$ It is known that the equation \eqref{k1}
  has the Jost solution $e(x,\rho)$, which is a matrix-valued function satisfying the integral equation
\begin{equation}\label{jost}
  e(x,\rho)=e^{i\rho x}I_n+\int_x^\infty\frac{\sin \rho(t-x)}{\rho}Q(t)e(t,\rho)dt,\quad x\ge0,  \quad \rho\in \Omega.
\end{equation}
Using the method of successive approximation for \eqref{jost}, we obtain that for each fixed $x\ge 0$, the matrix-functions $e^{(\nu)}(x,\rho)$ ($\nu =0,1$) are analytic for ${\rm Im}\rho >0$, and are continuous for $\rho \in \Omega$, and have the asymptotics (see, e.g., \cite{AW})
\begin{equation}\label{jost1}
  e(x,\rho)=e^{i\rho x}\left[I_n+\frac{1}{i\rho }[-\omega(x,0)+\omega(x,\rho)]+O\left(\frac{1}{\rho^2}\right)\right],  \quad |\rho|\to\infty ,\quad \rho\in \Omega,
\end{equation}
\begin{equation}\label{jost2}
  e'(x,\rho)=i\rho e^{i\rho x}\left[I_n-\frac{1}{i\rho}(\omega(x,0)+\omega(x,\rho))+O\left(\frac{1}{\rho^2}\right)\right],  \quad |\rho|\to\infty ,\quad \rho\in \Omega ,
\end{equation}
where
\begin{equation*}
  \omega(x,\rho)=\frac{1}{2}\int_x^\infty Q(t)e^{2i\rho (t-x)}dt.
\end{equation*}

Define the Jost matrix
\begin{equation}\label{yj26}
    J(\rho):=T(e(x,\rho))=A(e'(0,\rho)-he(0,\rho))-A^\perp e(0,\rho).
\end{equation}
\begin{lemma}\label{lemA2}
The Jost matrix $J(\rho)$ defined in \eqref{yj26} is analytic for ${\rm Im}\rho >0$, and continuous for $\rho \in \Omega$, and satisfies the asymptotics
\begin{equation}\label{k39}
    J(\rho)=J_{0}(\rho)\left[I_{n}-\frac{h+\omega(0,0)}{i\rho}+\frac{\kappa(\rho)}{i\rho}+O\left(\frac{1}{\rho^2}\right)\right],\quad \lvert\rho\rvert\to\infty,\quad \rho\in\Omega,
\end{equation}
where
\begin{equation}\label{k40}
J_{0}(\rho):=i\rho A-A^{\perp},\quad  \kappa(\rho):={(A^\perp-A)\omega(0,\rho)}=\frac{A^\perp-A}{2}\int_0^\infty Q(t)e^{2i\rho t}dt.
\end{equation}
\end{lemma}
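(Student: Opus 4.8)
The plan is to substitute the large-$\rho$ asymptotics of the Jost solution \eqref{jost1}--\eqref{jost2} into the definition \eqref{yj26} of the Jost matrix and then factor out the leading term $J_0(\rho)=i\rho A - A^\perp$. Analyticity of $J(\rho)$ in the open upper half-plane and continuity on $\Omega$ are inherited immediately: by the stated properties of $e^{(\nu)}(x,\rho)$ the maps $\rho\mapsto e(0,\rho)$ and $\rho\mapsto e'(0,\rho)$ are analytic for ${\rm Im}\,\rho>0$ and continuous on $\Omega$, and $T(\cdot)$ is a fixed linear combination of the values and first derivatives at $x=0$; hence so is $J(\rho)$. So the real content is the asymptotic formula \eqref{k39}.

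For the asymptotics, evaluate \eqref{jost1}--\eqref{jost2} at $x=0$: writing $\omega:=\omega(0,\rho)$ and noting $\omega(0,0)=\tfrac12\int_0^\infty Q(t)\,dt$, we get
\begin{align*}
e(0,\rho)&=I_n+\frac{1}{i\rho}\bigl(-\omega(0,0)+\omega\bigr)+O(\rho^{-2}),\\
e'(0,\rho)&=i\rho\Bigl[I_n-\frac{1}{i\rho}\bigl(\omega(0,0)+\omega\bigr)+O(\rho^{-2})\Bigr].
\end{align*}
Then $e'(0,\rho)-he(0,\rho)=i\rho I_n-\bigl(\omega(0,0)+\omega\bigr)-h+O(\rho^{-1})$, so that
\begin{equation*}
J(\rho)=A\bigl(i\rho I_n-\omega(0,0)-\omega-h\bigr)-A^\perp\bigl(I_n-\tfrac{1}{i\rho}(-\omega(0,0)+\omega)\bigr)+O(\rho^{-1}),
\end{equation*}
and the leading part is exactly $J_0(\rho)=i\rho A-A^\perp$. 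The next step is to pull $J_0(\rho)$ out on the left. Here one uses that $J_0(\rho)$ is invertible for large $|\rho|$ with $J_0(\rho)^{-1}=\tfrac{1}{i\rho}A-A^\perp$ (immediate from $A A^\perp=0$, $A^2=A$, $(A^\perp)^2=A^\perp$), together with the commutation relations $h=Ah=hA=AhA$ from \eqref{k2}. Multiplying $J(\rho)$ on the left by $J_0(\rho)^{-1}$ and collecting the $O(1/\rho)$ terms, the $A$-block contributes $-\tfrac{1}{i\rho}A(\omega(0,0)+\omega+h)$ and the $A^\perp$-block contributes $-\tfrac{1}{i\rho}A^\perp(-\omega(0,0)+\omega)=\tfrac{1}{i\rho}A^\perp(\omega(0,0)-\omega)$; using $Ah=h$ and combining these with $A+A^\perp=I_n$ gives $-\tfrac{1}{i\rho}\bigl(h+\omega(0,0)\bigr)+\tfrac{1}{i\rho}(A^\perp-A)\omega$, which is precisely $-\tfrac{1}{i\rho}(h+\omega(0,0))+\tfrac{\kappa(\rho)}{i\rho}$ with $\kappa(\rho)=(A^\perp-A)\omega(0,\rho)$ as in \eqref{k40}. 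The remainder is $O(\rho^{-2})$ after the multiplication because $J_0(\rho)^{-1}=O(1)$ on $\Omega$ and the pre-factored error was $O(\rho^{-1})$ relative to the $i\rho I_n$ scale, i.e. $O(\rho^{-2})$ in absolute terms once one accounts for the $\rho$ in $J_0$; a careful bookkeeping of which error terms get multiplied by $\tfrac1{i\rho}A$ versus $-A^\perp$ confirms the stated order.

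The main obstacle is the bookkeeping in the factorization: because $J_0(\rho)$ has a block of size $i\rho$ (the $A$-part) and a block of size $1$ (the $A^\perp$-part), the $O(\rho^{-2})$ error terms in \eqref{jost1} and the $O(\rho^{-1})$ error in \eqref{jost2} interact with $J_0(\rho)^{-1}$ in an asymmetric way, and one must check that no $O(1/\rho)$ error survives beyond the displayed term. This is handled by tracking the $A$- and $A^\perp$-projections of each term separately and using $AA^\perp=A^\perp A=0$ to kill cross terms, together with $Ah=h$ to simplify the $h$-contribution; the uniform-in-$\Omega$ boundedness of $J_0(\rho)^{-1}$ for $|\rho|$ large then gives the claimed uniform asymptotic. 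Everything else is the routine substitution and continuity argument sketched above.
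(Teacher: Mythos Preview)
Your proposal is correct and follows essentially the same route as the paper: substitute the asymptotics \eqref{jost1}--\eqref{jost2} into the definition \eqref{yj26}, factor out $J_0(\rho)=i\rho A-A^\perp$ on the left using $J_0(\rho)^{-1}A=\tfrac{A}{i\rho}$ and $J_0(\rho)^{-1}A^\perp=-A^\perp$, and simplify via $Ah=h$ and $A+A^\perp=I_n$. Your remark that the error terms must be tracked separately on the $A$- and $A^\perp$-blocks (so that the $A$-projected $O(\rho^{-1})$ error from $e'(0,\rho)$ gets divided by $i\rho$, while the $A^\perp$-projected error from $e(0,\rho)$ is already $O(\rho^{-2})$) is exactly the point needed to justify the $O(\rho^{-2})$ remainder, and matches the paper's computation \eqref{jost3}--\eqref{xj3}.
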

\begin{proof}
 Using \eqref{jost1} and \eqref{jost2} in \eqref{yj26}, we have that as $\lvert\rho\rvert\to\infty $ in $\Omega$,
 \begin{align}\label{jost3}
\notag J(\rho) = &i\rho A -A^{\perp}-A\left[h+\omega(0,0)+\omega(0,\rho)+O\left(\frac{1}{\rho}\right) \right]+\frac{A^\perp}{i\rho}\left[\omega(0,0)-\omega(0,\rho)+O\left(\frac{1}{\rho}\right)\right]\\
\notag =&J_0(\rho)\left[I_n-J_0(\rho)^{-1}A\left(h+\omega(0,0)+\omega(0,\rho)+O\left(\frac{1}{\rho}\right) \right)\right.\\
&\left.\qquad\qquad+J_0(\rho)^{-1}\frac{A^\perp}{i\rho}\left(\omega(0,0)-\omega(0,\rho)+O\left(\frac{1}{\rho}\right)\right)\right].
 \end{align}
Since $A$ is an orthogonal projection matrix, it is easy to get that
\begin{equation}\label{xj3}
(i\rho A-A^\perp )^{-1}A=\frac{A}{i\rho},\quad (i\rho A-A^\perp )^{-1}A^\perp =-A^\perp.
\end{equation}
Using \eqref{xj3} in \eqref{jost3}, and noting $A+A^\perp =I_n$ and $Ah=h$, we get \eqref{k39}.
 The proof is complete.
\end{proof}

Denote by $\Pi$ the $\lambda$-plane with the cut $\lambda\ge0$, and $\Pi_1=\overline{\Pi}\setminus\{0\}$. Note that $\Pi_1$ and ${\Pi}$ must be considered as subsets of the Riemann surface of the square root function. Then, under
the map $\rho\to\rho^2=\lambda$, the domain $\Omega$ corresponds to $\Pi_1$.
Introduce the \emph{Weyl solution}
\begin{equation}
   \phi(x,\lambda) :=e(x,\rho)J(\rho)^{-1},\quad \rho \in \Omega.
\end{equation}
It obvious that the Weyl solution $\phi(x,\lambda)$ satisfies
\begin{equation}\label{xc7}
    T(\phi)=I_{n}.
\end{equation}
Due to \eqref{jost1}, \eqref{jost2} and \eqref{k39}, we have that as $\lvert\rho\rvert\to\infty,\rho\in\Omega,\nu=0,1$
\begin{equation}\label{k42}
    \phi^{(\nu)}(x,\lambda)=(i\rho)^{\nu}e^{i\rho x}(I_{n}+O(1/\rho))(i\rho A-A^{\perp} )^{-1}.
\end{equation}
Define \begin{equation}\label{k2.12}
 M(\lambda):=A\phi (0,\lambda)+A^{\perp}\phi' (0,\lambda)=\left[Ae (0,\rho)+A^{\perp}e' (0,\rho)\right]J(\rho)^{-1},
\end{equation}
which is called the \emph{Weyl matrix} for the problem $L(Q,A,h)$.
Denote
\begin{equation}
    \Lambda=\{\lambda=\rho^{2}:\rho \in\Omega, \det J(\rho)=0\},
\end{equation}
\begin{equation}
    \Lambda'=\{\lambda=\rho^{2}:{\rm Im}\rho >0, \det J(\rho)=0\},
\end{equation}
\begin{equation}
    \Lambda''=\{\lambda=\rho^{2}:{\rm Im}\rho=0, \rho \neq 0,  \det J(\rho)=0\}.
\end{equation}
Clearly, $\Lambda =  \Lambda' \cup \Lambda ''$ is a bounded set and $\Lambda'$ is a bounded and at most countable set. By the definition of the Weyl matrix, we can easily obtain the  analytic property of  $M(\lambda)$. Let us summarize it in the following proposition.
\begin{proposition}
 The Weyl matrix $M(\lambda)$ is analytic in $\Pi\setminus\Lambda'$ and continuous in $\Pi_1\setminus \Lambda$. The set of singularities of $M(\lambda)$ (as an analytic function) coincides with the set $\Lambda_{0}:=\left\{\lambda: \lambda\ge 0\right\}\cup\Lambda$. Moreover, as $\lvert\rho\rvert \rightarrow \infty$ in $ \Omega,$ there holds the asymptotics
\begin{equation}\label{yj217}
 M(\lambda) = (A+i\rho A^{\perp})\left[I_{n}+\frac{h}{i\rho}-\frac{2\kappa(\rho)}{i\rho}+O\left(\frac{1}{\rho^2}\right)\right]\left(i\rho A-A^\perp \right)^{-1},
\end{equation}
where $\kappa(\rho)$ is defined in \eqref{k40}.
\end{proposition}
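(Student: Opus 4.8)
The plan is to work throughout from the factored form \eqref{k2.12}, namely $M(\lambda)=\big[Ae(0,\rho)+A^{\perp}e'(0,\rho)\big]J(\rho)^{-1}$ with $\rho=\sqrt{\lambda}\in\Omega$. For the analyticity and continuity I would merely assemble facts already established: by the successive‑approximation discussion preceding Lemma~\ref{lemA2}, $e(0,\rho)$ and $e'(0,\rho)$ are analytic for ${\rm Im}\,\rho>0$ and continuous on $\Omega$; by Lemma~\ref{lemA2} the same holds for $J(\rho)$; and \eqref{k39} shows $\det J(\rho)\neq 0$ outside the bounded set $\Lambda$, so $J(\rho)^{-1}$ exists and inherits analyticity and continuity wherever $\det J(\rho)\neq 0$. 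Hence the matrix product in \eqref{k2.12} is analytic for ${\rm Im}\,\rho>0$, $\det J\neq0$, and continuous on $\Omega$, $\det J\neq0$; transporting this through $\rho\mapsto\rho^{2}=\lambda$ — under which $\{{\rm Im}\,\rho>0\}$ corresponds to $\Pi$, $\Omega$ to $\Pi_{1}$, and the two zero sets of $\det J$ to $\Lambda'$ and $\Lambda$ — gives the stated analyticity on $\Pi\setminus\Lambda'$ and continuity on $\Pi_{1}\setminus\Lambda$.

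For the description of the singular set, the only obstructions to single‑valued analytic continuation of $M$ are the branch cut $\{\lambda\ge 0\}$ and the zeros of $\det J$. Across the cut the two determinations of $M$ are built from $\rho=\pm\sqrt{\lambda}$, and $e(x,\rho),e(x,-\rho)$ are linearly independent solutions of \eqref{k1}, so $M$ genuinely jumps there; at each point of $\Lambda=\Lambda'\cup\Lambda''$ the factor $J(\rho)^{-1}$, hence $M$, is singular. Since $\Lambda''\subset\{\lambda\ge 0\}$, the union of these obstruction sets is exactly $\Lambda_{0}=\{\lambda\ge 0\}\cup\Lambda$, which together with the previous paragraph identifies the singular set with $\Lambda_{0}$.

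The substantive part is the asymptotics \eqref{yj217}, which I would obtain by expanding the two factors of \eqref{k2.12} separately. Putting $x=0$ in \eqref{jost1}, \eqref{jost2} gives $Ae(0,\rho)=A+\tfrac{A}{i\rho}(\omega(0,\rho)-\omega(0,0))+O(1/\rho^{2})$ and $A^{\perp}e'(0,\rho)=i\rho A^{\perp}-A^{\perp}(\omega(0,0)+\omega(0,\rho))+A^{\perp}O(1/\rho)$. The delicate feature is that this last remainder really is of order $1/\rho$ (it is $i\rho$ times the $O(1/\rho^{2})$ tail of \eqref{jost2}), so it cannot be thrown into an $O(1/\rho^{2})$ term as it stands. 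The cure is algebraic: using $(A+i\rho A^{\perp})^{-1}=A+\tfrac{1}{i\rho}A^{\perp}$ and the projector identities $A^{2}=A$, $(A^{\perp})^{2}=A^{\perp}$, $AA^{\perp}=A^{\perp}A=0$, one factors $A+i\rho A^{\perp}$ out of the sum on the left; then every term carried on the $A^{\perp}$‑channel — in particular the $O(1)$ term $-A^{\perp}(\omega(0,0)+\omega(0,\rho))$ and the awkward $A^{\perp}O(1/\rho)$ — gets multiplied by $\tfrac{1}{i\rho}A^{\perp}$ and drops one power of $\rho$, while the $A$‑channel terms are unchanged. Collecting the surviving $1/\rho$ contributions and using $\kappa(\rho)=(A^{\perp}-A)\omega(0,\rho)$ from \eqref{k40},
\begin{equation*}
Ae(0,\rho)+A^{\perp}e'(0,\rho)=(A+i\rho A^{\perp})\Big[I_{n}-\tfrac{1}{i\rho}\big(\omega(0,0)+\kappa(\rho)\big)+O(1/\rho^{2})\Big].
\end{equation*}
On the other side, \eqref{k39} rewrites as $J(\rho)=(i\rho A-A^{\perp})\big[I_{n}-\tfrac{1}{i\rho}(h+\omega(0,0)-\kappa(\rho))+O(1/\rho^{2})\big]$ — which incidentally also shows $\det J(\rho)\neq 0$ for $|\rho|$ large, so the expansion is meaningful — whence, by Neumann inversion, $J(\rho)^{-1}=\big[I_{n}+\tfrac{1}{i\rho}(h+\omega(0,0)-\kappa(\rho))+O(1/\rho^{2})\big](i\rho A-A^{\perp})^{-1}$. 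Multiplying the two expansions, the two $\omega(0,0)$ terms cancel, the two $\kappa(\rho)$ terms add to $-2\kappa(\rho)$, the cross terms are $O(1/\rho^{2})$, and one lands exactly on \eqref{yj217}.

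The main obstacle — in fact the only non‑routine point — is the order bookkeeping in this computation: the left factor $A+i\rho A^{\perp}$ and the right factor $(i\rho A-A^{\perp})^{-1}=\tfrac{A}{i\rho}-A^{\perp}$ each mix the orders $\rho$ and $1/\rho$ through the projector $A$, so one must keep track of which remainders sit in the range of $A$ and which in the range of $A^{\perp}$ before calling them $O(1/\rho^{2})$. The identities \eqref{xj3} and $(A+i\rho A^{\perp})^{-1}=A+\tfrac{1}{i\rho}A^{\perp}$ are precisely what make this cancellation transparent; once they are in hand the remaining algebra is mechanical.
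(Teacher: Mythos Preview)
Your proposal is correct and follows essentially the same route as the paper. The paper in fact only proves the asymptotic formula \eqref{yj217}, declaring the analyticity, continuity, and singular-set statements to be immediate from the definition; for \eqref{yj217} it derives the identical intermediate expansion $Ae(0,\rho)+A^{\perp}e'(0,\rho)=(A+i\rho A^{\perp})\big[I_{n}-\tfrac{\omega(0,0)}{i\rho}-\tfrac{\kappa(\rho)}{i\rho}+O(1/\rho^{2})\big]$ via the same projector identities you use, then substitutes this together with \eqref{k39} into \eqref{k2.12}, exactly as you do.
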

\begin{proof}
We only prove \eqref{yj217}. Using \eqref{jost1} and \eqref{jost2}, and noting
\begin{equation*}
(A+i\rho A^\perp )^{-1}A=A,\quad (A+i\rho A^\perp )^{-1}A^\perp =\frac{A^\perp}{i\rho},\quad A+A^\perp=I_n,
\end{equation*}
we have
\begin{equation}\label{moa}
 Ae (0,\rho)+A^{\perp}e' (0,\rho)=(A+i\rho A^\perp)\left[I_n-\frac{\omega(0,0)}{i\rho}-\frac{\kappa(\rho)}{i\rho}+O\left(\frac{1}{\rho^2}\right)\right].
\end{equation}
Substituting  \eqref{k39} and \eqref{moa} into \eqref{k2.12}, we arrive at \eqref{yj217}. The proof is complete.
\end{proof}
% \begin{lemma}\label{lemA1}

%Equation \eqref{k1} has a unique matrix solution $e(x,\rho)=[e_{k\nu}(x,\rho)]_{k,\nu=\overline{1,n}}$, $\rho \in \Omega, x\ge 0$, the matrix-function $e(x,\rho)$ has the following properties:\\
%(1)For $\lvert \rho \rvert \rightarrow \infty, \rho \in \Omega ,\nu =0,1,$
%\begin{equation}\label{k2.2}
%   e^{(\nu)}(x,\rho)=(i\rho)^{\nu}\exp(i\rho x)(I_{n}-\frac{1}{2i\rho}\int_x^\infty Q(t)dt+o(1/\rho )),
%\end{equation}
%uniformly for $x\ge 0$.\\
%\end{lemma}
Let $\varphi(x,\lambda), S(x,\lambda) $ be the matrix-valued solutions of \eqref{k1} with the initial conditions, respectively,
\begin{equation}\label{yj}
    \varphi(0,\lambda)=A,\quad \varphi'(0,\lambda)=A^\perp +h,
\end{equation}
\begin{equation}\label{yj22}
  S(0,\lambda)=-A^\perp,\quad S'(0,\lambda)=A.
\end{equation}
It is easy to verify that
\begin{equation}\label{xc6}
    T(S)=I_n,\quad T(\varphi)=0_n.
\end{equation}
It is known that $\varphi(x,\lambda)$ satisfies the integral equation
\begin{equation}\label{moa2}
\varphi(x,\lambda)=A\cos \rho x+(A^\perp +h)\frac{\sin \rho x}{k}+\int_0^x\frac{\sin \rho (x-t)}{\rho }Q(t)\varphi(t,\lambda)dt.
\end{equation}
Using the method of successive approximation for \eqref{moa2}, we get that for $|\rho|\to\infty $ in $\mathbb{C}$ there hold (see, e.g., \cite{X0})
\begin{equation}\label{k23}
    \varphi(x,\lambda)=A\cos\rho x+(A^\perp +h)\frac{ \sin\rho x}{\rho}+\frac{\sin\rho x}{2\rho}\int_0^xQ(t)dtA+o\left(\frac{\exp(\lvert {\rm Im}\rho\rvert x) }{\rho}\right),
\end{equation}
\begin{equation}\label{k24}
    \varphi'(x,\lambda)=-A\rho \sin\rho x+(A^\perp +h)\cos\rho x+\frac{\cos\rho x}{2}\int_0^xQ(t)dtA+o\left({\exp(\lvert {\rm Im}\rho\rvert x)} \right).
\end{equation}
Since $S(x,\lambda)$ and $\varphi (x,\lambda)$ form a fundamental system, we have
\begin{equation}
 \phi(x,\lambda)=S(x,\lambda)C_{1}+\varphi(x,\lambda)C_{2},
\end{equation}
 where $C_{1}$, $ C_{2}$ are $n\times n$ matrices independent of $x$. Using \eqref{xc6} and \eqref{xc7}, we have $C_{1}=I_{n}$.
Using \eqref{k2.12}, we get $C_{2}=M(\lambda)$.
Thus,
\begin{equation}\label{k15}
\phi(x,\lambda)=S(x,\lambda)+\varphi(x,\lambda)M(\lambda).
\end{equation}

Denote
\begin{equation}\label{k5}
 \ell^{*} Z:= -Z''+ZQ(x),\quad     \left\langle Z,Y \right\rangle:=Z'Y-ZY',
  \end{equation}
  \begin{equation}\label{k6}
    T^*(Z):=[Z'(0)-Z(0)h]A- Z(0)A^\perp = 0_{n},
  \end{equation}
 where $Z$ is a  matrix-valued function or a row vector-valued function.
Obviously, if $Y(x,\lambda)$ and $Z(x,\mu)$ satisfy the equations$\ell Y(x,\lambda)=\lambda Y(x,\lambda)$ and
 $\ell^{*}Z(x,\mu)=\mu Z(x,\mu)$,  respectively,
 then
\begin{equation}\label{k8}
\frac{d}{dx}\left\langle Z(x,\mu),Y(x,\lambda)\right\rangle=(\lambda-\mu)Z(x,\mu)Y(x,\lambda).
\end{equation}
It is easy to show that the equation $\ell^{*}Z=\lambda Z$ has the Jost solution
satisfying  the   integral equation
\begin{equation*}
  e^*(x,\rho)=e^{i\rho x}I_n+\int_x^\infty\frac{\sin \rho(t-x)}{\rho}e^*(t,\rho)Q(t)dt,\quad x\ge0,  \quad \rho\in \Omega .
\end{equation*}
Let $\varphi^{*}(x,\lambda)$ and $S^{*}(x,\lambda)$ be the matrix-valued solutions satisfying the equation $\ell^{*}Z=\lambda Z$ and the initial conditions, respectively,
\begin{equation}\label{k9}
   \varphi^{*}(0,\lambda)=A,\quad \varphi^{*'}(0,\lambda)=A^{\perp}+h,\quad S^{*}(0,\lambda)=-A^{\perp},\quad S^{*'}(0,\lambda)=A.
   \end{equation}
Denote
\begin{equation}
    \phi^{*}(x,\lambda):=(T^{*}(e^{*}(x,\lambda)))^{-1}e^{*}(x,\rho),\quad M^{*}(\lambda):=\phi^*(0,\lambda)A+\phi^{*'} (0,\lambda)A^{\perp}.
\end{equation}
Similar to \eqref{k42} and \eqref{k15}, we also have
\begin{equation}\label{k2.38}
   \phi^{*(\nu)}(x,\lambda)=(i\rho)^{\nu}e^{i\rho x}(i\rho A+A^\perp )^{-1}(I_{n}+O(1/\rho)), \quad \lvert \rho \rvert \rightarrow \infty,\quad\rho\in \Omega.
\end{equation}
\begin{equation}\label{pp15}
\phi^{*}(x,\lambda)=S^{*}(x,\lambda)+M^{*}(\lambda)\varphi^{*}(x,\lambda),
\end{equation}
%It's easy to verify
%\begin{equation}
 %   T^{*}(\phi^{*})=I_{n}.
%\end{equation}
Considering \eqref{k8}, $\left\langle \phi^{*}(x,\lambda),\phi(x,\lambda)\right\rangle\ $ does not depend on $x$. Using \eqref{k15} and \eqref{pp15}, we calculate
\begin{align}\label{k16}
 \notag \left\langle\phi^{*}(x,\lambda),\phi(x,\lambda)\right\rangle\mid_{x=0}&=\phi^{*'}(0,\lambda)\phi(0,\lambda)-\phi^{*}(0,\lambda)\phi'(0,\lambda)\\
 \notag   &=(S^{*'}(0,\lambda)\varphi(0,\lambda)-S^{*}(0,\lambda)\varphi'(0,\lambda))M(\lambda)-M^{*}(\lambda)(\varphi^{*}(0,\lambda)S'(0,\lambda)\\
 \notag   &-\varphi^{*'}(0,\lambda)S(0,\lambda))+M^{*}(\lambda)(\varphi^{*'}(0,\lambda)\varphi(0,\lambda)-\varphi^{*}(0,\lambda)\varphi'(0,\lambda))M(\lambda)\\
 &=M(\lambda)-M^{*}(\lambda).
\end{align}
Due to \eqref{k8} again, we get
\begin{align}\label{k2.35}
\notag \lim\limits_{x\to\infty}\left\langle \phi^{*}(x,\lambda),\phi(x,\lambda)\right\rangle\ &=\lim\limits_{x\to\infty}({\phi^{*'}}(x,\lambda)\phi(x,\lambda)- \phi^{*}(x,\lambda)\phi'(x,\lambda) )\\
\notag &=\lim\limits_{x\to\infty}(T^{*}(e^{*}(x,\lambda)))^{-1}(e^{*'}(x,\lambda)e(x,\lambda)- e^{*}(x,\lambda)e'(x,\lambda) )(T(e(x,\lambda)))^{-1}\\&
  =0.
\end{align}
Using \eqref{k16} and \eqref{k2.35}, we have
\begin{equation}\label{k17}
 M(\lambda)=M^{*}(\lambda).
\end{equation}

%ТЄРґТ»¶ОЧЬЅбЛщУРµД№АјЖКЅµДѕдЧУ

\section{The uniqueness theorem}

In this section, we prove the uniqueness theorem for the solution of the following inverse problem.
\begin{ip}\label{ip1}
Given the Weyl matrix $M(\lambda)$, find $Q(x)$, $A$ and $h$.
\end{ip}
Together with $L={L}(Q,A,h)$, we consider a boundary value problem $\widetilde{L}=\textit{L}(\widetilde{Q},\widetilde{A},\widetilde{h})$ of the same form but with different $\widetilde{ \textit{Q}}$ ,$\widetilde{\textit{h}}$ and $\widetilde{\textit{A}}$. We agree that if a symbol $\alpha$ denotes an object related to $\textit{L}(Q,A,h)$, then $\widetilde{\alpha}$ will denote the analogous object related to $\textit{L}(\widetilde{Q},\widetilde{A},\widetilde{h})$.

\begin{lemma}\label{lem5}
 Define the block-matrix
\begin{equation}\label{k46}
   P(x,\lambda)=\begin{bmatrix}
 \varphi(x,\lambda) & \phi(x,\lambda)\\
 \varphi'(x,\lambda) & \phi'(x,\lambda)\\
 \end{bmatrix}\begin{bmatrix}
 \widetilde{\varphi}(x,\lambda) & \widetilde{\phi}(x,\lambda)\\
     \widetilde{\varphi'}(x,\lambda) & \widetilde{\phi'}(x,\lambda)\\
 \end{bmatrix}^{-1}.
\end{equation} If $\widetilde{A}=A$, then for all $x\ge0$ there hold the estimates
  \begin{equation}\label{px1}
    P_{jk}(x,\lambda)=\delta_{jk}I_{n}+O(1/\rho),\quad P_{21}(x,\lambda)=O(1), \quad  \lvert\rho\rvert\to\infty,\quad \rho\in\Omega, \quad 1\le j\le k\le 2,
  \end{equation}
  where $\delta_{jk}=1$ if $j=k$ and $\delta_{jk}=0$ if $j\ne k$.
  \end{lemma}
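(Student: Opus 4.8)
The plan is to compute $P(x,\lambda)$ explicitly in terms of the Weyl-type objects and then feed in the known large-$\rho$ asymptotics. First I would rewrite the inverse of the second factor in \eqref{k46} using the fact that the Wronskian-type matrix
\[
\mathcal{W}(x,\lambda)=\begin{bmatrix}\widetilde{\varphi}&\widetilde{\phi}\\ \widetilde{\varphi}'&\widetilde{\phi}'\end{bmatrix}
\]
has a constant (in $x$) "determinant block structure": since $\widetilde{\ell}\widetilde{\varphi}=\lambda\widetilde{\varphi}$ and $\widetilde{\ell}\widetilde{\phi}=\lambda\widetilde{\phi}$ share the same equation, the bilinear form $\langle\cdot,\cdot\rangle$ from \eqref{k5} gives $x$-independent blocks; using the initial conditions \eqref{yj}, \eqref{yj22} together with $T(\widetilde{S})=I_n$, $T(\widetilde{\varphi})=0_n$ and $T^*$-analogues, one identifies $\mathcal{W}(x,\lambda)^{-1}$ in closed form with entries built from $\widetilde{\varphi}^{*},\widetilde{\phi}^{*}$ (this is the standard "dual solutions invert the fundamental matrix" identity, parallel to how \eqref{k16} was computed). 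Concretely I expect
\[
\mathcal{W}(x,\lambda)^{-1}=\begin{bmatrix}\widetilde{\phi}^{*'}(x,\lambda)&-\widetilde{\phi}^{*}(x,\lambda)\\ -\widetilde{\varphi}^{*'}(x,\lambda)&\widetilde{\varphi}^{*}(x,\lambda)\end{bmatrix}
\]
up to the correct normalization, using $\langle\widetilde{\varphi}^{*},\widetilde{\phi}\rangle=I_n$, $\langle\widetilde{\varphi}^{*},\widetilde{\varphi}\rangle=0_n$, $\langle\widetilde{\phi}^{*},\widetilde{\varphi}\rangle=-I_n$, $\langle\widetilde{\phi}^{*},\widetilde{\phi}\rangle=0_n$, which follow from the initial data and $x$-independence.

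Next I would multiply out the two blocks to get each $P_{jk}(x,\lambda)$ as a sum of products like $\varphi\,\widetilde{\phi}^{*'}-\phi\,\widetilde{\varphi}^{*'}$, etc. Then I substitute the asymptotics: \eqref{k42} for $\phi^{(\nu)}$, \eqref{k2.38} for $\phi^{*(\nu)}$, and the analogue of \eqref{k23}–\eqref{k24} for $\varphi,\varphi'$ (and the $*$-versions). The key algebraic input is that $\widetilde{A}=A$, so the "singular" factors $(i\rho A-A^\perp)^{-1}$ coming from $\widetilde{\phi}^{*}$ and the $(i\rho A+A^\perp)^{-1}$-type factors combine with the projection identities \eqref{xj3} and the analogous ones for $(A+i\rho A^\perp)^{-1}$ to cancel the spurious powers of $\rho$; without $\widetilde A=A$ these factors would not match and the estimates would fail. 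After cancellation, the diagonal blocks $P_{11},P_{22}$ reduce to $I_n+O(1/\rho)$ (the leading terms of $\varphi\widetilde{\phi}^{*'}$-type products being $\cos\rho x\cdot$(something)$+$ Jost-type pieces that assemble to the identity by the same bookkeeping used in deriving \eqref{k39} and \eqref{yj217}), the off-diagonal $P_{12}$ gains one extra power of $1/\rho$, and $P_{21}$ stays $O(1)$ because it is the one combination where a factor of $i\rho$ survives against only a single $1/\rho$.

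The main obstacle, I expect, is the careful tracking of the projection-matrix factors: unlike the scalar case, $A$, $A^\perp$, $i\rho A-A^\perp$ and $A+i\rho A^\perp$ do not commute with the $O(1/\rho)$ remainder matrices, and one must verify that every remainder term, once sandwiched between the correct projectors, really is $O(1/\rho)$ (or $O(1)$ for $P_{21}$) uniformly in $x\ge 0$ — in particular that the $o(\exp(|\mathrm{Im}\,\rho|x)/\rho)$ errors in \eqref{k23}–\eqref{k24} are killed by the $e^{i\rho x}$ decay of the Jost factors. A secondary point is making precise that $\mathcal{W}(x,\lambda)$ is invertible for $\rho\in\Omega$ with $|\rho|$ large (equivalently, $\det\widetilde J(\rho)\neq 0$ there), which follows from \eqref{k39} since $J_0(\rho)=i\rho A-A^\perp$ is invertible for $\rho\neq 0$ and the bracketed factor tends to $I_n$. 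Once these uniformity checks are in place, \eqref{px1} drops out of the explicit block multiplication. I would present the computation by first stating the inversion identity for $\mathcal{W}^{-1}$ as a short sub-step, then the block product, then the substitution of asymptotics with the projector cancellations highlighted.
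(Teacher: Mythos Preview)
Your plan is correct and follows the paper closely through the inversion of $\mathcal{W}$ via the dual solutions $\widetilde{\varphi}^{*},\widetilde{\phi}^{*}$ and the block multiplication giving
\[
P_{j1}=\varphi^{(j-1)}\widetilde{\phi}^{*'}-\phi^{(j-1)}\widetilde{\varphi}^{*'},\qquad
P_{j2}=\phi^{(j-1)}\widetilde{\varphi}^{*}-\varphi^{(j-1)}\widetilde{\phi}^{*}.
\]
The one substantive step you are missing is that the paper does \emph{not} then substitute asymptotics directly into these products and try to watch the leading pieces ``assemble to $I_{n}$'' (your appeal to the bookkeeping behind \eqref{k39} and \eqref{yj217} is a red herring here). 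Instead, it first invokes the \emph{tilded} Wronskian identities --- $\widetilde{\varphi}\,\widetilde{\phi}^{*'}-\widetilde{\phi}\,\widetilde{\varphi}^{*'}=I_{n}$, $\widetilde{\varphi}^{(\nu)}\widetilde{\phi}^{*(\nu)}-\widetilde{\phi}^{(\nu)}\widetilde{\varphi}^{*(\nu)}=0_{n}$, etc. --- to rewrite each block in difference form, for instance
\[
P_{11}=I_{n}+(\varphi-\widetilde{\varphi})\widetilde{\phi}^{*'}-(\phi-\widetilde{\phi})\widetilde{\varphi}^{*'},
\]
and analogously for $P_{12},P_{21},P_{22}$. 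Now the $I_{n}$ is free, and the entire remaining task is to bound the \emph{differences} $\varphi-\widetilde{\varphi}$, $\phi-\widetilde{\phi}$ (and their derivatives). These differences have their leading terms cancelled precisely because $A=\widetilde A$: both $\varphi$ and $\widetilde\varphi$ start with $A\cos\rho x$, and both $\phi$ and $\widetilde\phi$ start with $e^{i\rho x}(i\rho A-A^{\perp})^{-1}$. With the leading order gone, the projection identity $(\rho A\pm A^{\perp})^{-1}A=A/\rho$ controls the subleading pieces, and the exponential factors pair off exactly as you anticipated.

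Your direct route can be pushed through --- if one keeps enough terms, the leading contributions do combine to $e^{i\rho x}(\cos\rho x - i\sin\rho x)I_{n}=I_{n}$ --- but it is longer, more error-prone (the $O(\rho)$ factor $A+i\rho A^{\perp}$ hitting the unstructured $o(e^{|{\rm Im}\,\rho|x}/\rho)$ remainder of \eqref{k23} needs care), and it obscures \emph{why} the hypothesis $A=\widetilde A$ is exactly what is needed. The difference-rewriting is the one idea worth adding to your outline; with it the asymptotic verification becomes a two-line check rather than a full leading-order computation.
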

\begin{proof}
Using \eqref{k15}, \eqref{pp15}, \eqref{k8} and \eqref{k17}, together with the initial conditions \eqref{yj}, \eqref{yj22} and \eqref{k9}, we get
\begin{equation}\label{k18}
    \varphi^{*}(x,\lambda)\phi'(x,\lambda)-\varphi^{*'}(x,\lambda)\phi(x,\lambda)=I_{n},
\end{equation}
\begin{equation}\label{k19}
   \varphi^{*}(x,\lambda)\varphi'(x,\lambda)-\varphi^{*}(x,\lambda)\varphi(x,\lambda)=0_n,
\end{equation}
\begin{equation}\label{k20}
  \phi^{*'}(x,\lambda)\varphi(x,\lambda)-\phi^{*}(x,\lambda)\varphi'(x,\lambda)=I_{n},
\end{equation}
\begin{equation}\label{k21}
  \phi^{*}(x,\lambda)\phi'(x,\lambda)-\phi^{*'}(x,\lambda)\phi(x,\lambda)=0_n.
\end{equation}
Hence
\begin{equation}\label{k22}
 \begin{bmatrix}
 \varphi(x,\lambda) & \phi(x,\lambda)\\
 \varphi'(x,\lambda) & \phi'(x,\lambda)\\
 \end{bmatrix}^{-1}=\begin{bmatrix}
 \phi^{*'}(x,\lambda) & -\phi^{*}(x,\lambda)\\
 -\varphi^{*'}(x,\lambda) & \varphi^{*}(x,\lambda)\\
\end{bmatrix}.
\end{equation}
It follows from \eqref{k46} and \eqref{k22} with tilde that
\begin{equation}\label{k47}
   P(x,\lambda)=\begin{bmatrix}
 \varphi(x,\lambda) & \phi(x,\lambda)\\
 \varphi'(x,\lambda) & \phi'(x,\lambda)\\
 \end{bmatrix}\begin{bmatrix}
 \widetilde{\phi}^{*'}(x,\lambda) & -\widetilde{\phi}^{*}(x,\lambda)\\
     -\widetilde{\varphi}^{*'}(x,\lambda) & \widetilde{\varphi}^{*}(x,\lambda)\\
 \end{bmatrix}.
\end{equation}
Hence
\begin{equation}\label{k48}
     P_{j1}(x,\lambda)=\varphi^{(j-1)}(x,\lambda)\widetilde{\phi}^{*'}(x,\lambda)-\phi^{(j-1)}(x,\lambda)\widetilde{\varphi}^{*'}(x,\lambda),\quad j=1,2,
\end{equation}
\begin{equation}\label{k49}
    P_{j2}(x,\lambda)=\phi^{(j-1)}(x,\lambda)\widetilde{\varphi}^{*}(x,\lambda)-\varphi^{(j-1)}(x,\lambda)\widetilde{\phi}^{*}(x,\lambda),\quad j=1,2.
\end{equation}
Note that \eqref{k22} with tilde also implies
 \begin{equation}
\widetilde{\varphi}(x,\lambda)\widetilde{\phi}^{*'}(x,\lambda)- \widetilde{\phi}(x,\lambda)\widetilde{\varphi}^{*'}(x,\lambda)=I_{n},
 \end{equation}
  \begin{equation}
\widetilde{\varphi}'(x,\lambda)\widetilde{\phi}^{*}(x,\lambda)- \widetilde{\phi}'(x,\lambda)\widetilde{\varphi}^{*}(x,\lambda)=-I_{n},
 \end{equation}
  \begin{equation}
  \widetilde{\varphi}^{(\nu)}(x,\lambda)\widetilde{\phi}^{*(\nu)}(x,\lambda)-\widetilde{\phi}^{(\nu)}(x,\lambda)\widetilde{\varphi}^{*(\nu)}(x,\lambda)=0_{n},\quad \nu=0,1.
 \end{equation}
 Combining with \eqref{k48} and \eqref{k49}, we have
 \begin{equation}\label{px7}
     P_{11}(x,\lambda)=I_{n}+(\varphi(x,\lambda)-\widetilde{\varphi}(x,\lambda))\widetilde{\phi}^{*'}(x,\lambda)-(\phi(x,\lambda) -\widetilde{\phi}(x,\lambda))\widetilde{\varphi}^{*'}(x,\lambda),
 \end{equation}
  \begin{equation}\label{px7s}
     P_{22}(x,\lambda)=I_{n}+(\phi'(x,\lambda)-\widetilde{\phi}'(x,\lambda))\widetilde{\varphi}^{*}(x,\lambda)-(\varphi'(x,\lambda) -\widetilde{\varphi}'(x,\lambda))\widetilde{\phi}^{*}(x,\lambda),
 \end{equation}
  \begin{equation}\label{px8}
     P_{12}(x,\lambda)=(\phi(x,\lambda)-\widetilde{\phi}(x,\lambda))\widetilde{\varphi}^{*}(x,\lambda)-(\varphi(x,\lambda) -\widetilde{\varphi}(x,\lambda))\widetilde{\phi}^{*}(x,\lambda).
 \end{equation}
   \begin{equation}\label{px8s}
     P_{21}(x,\lambda)=(\varphi'(x,\lambda)-\widetilde{\varphi}'(x,\lambda))\widetilde{\phi}^{*'}(x,\lambda)-(\phi'(x,\lambda) -\widetilde{\phi}'(x,\lambda))\widetilde{\varphi}^{*'}(x,\lambda).
 \end{equation}
 Due to \eqref{k42},  \eqref{k23} and \eqref{k2.38}, as well as $A=\widetilde{A}$, and noting that $(\rho A\pm A^\perp)^{-1}A=A/\rho $, we get that  for $ \lvert\rho\rvert\to\infty, \ \rho\in\Omega$, $\nu=0,1$,
\begin{equation}\label{px5}
 (\varphi^{(\nu)}(x,\lambda)-\widetilde{\varphi}^{(\nu)}(x,\lambda))\widetilde{\phi}^{*'}(x,\lambda)=O\left(\frac{1}{\rho^{1-\nu}}\right),\quad (\phi^{(\nu)}(x,\lambda) -\widetilde{\phi}^{(\nu)}(x,\lambda))\varphi^{*'}(x,\lambda)=O\left(\frac{1}{\rho^{1-\nu}}\right),
\end{equation}
\begin{equation}\label{px6}
    (\phi(x,\lambda)-\widetilde{\phi}(x,\lambda))\widetilde{\varphi}^{*}(x,\lambda)=O\left(\frac{1}{\rho}\right),\quad (\varphi(x,\lambda) -\widetilde{\varphi}(x,\lambda))\phi^{*}(x,\lambda)=O\left(\frac{1}{\rho}\right).
\end{equation}
Substitute \eqref{px5} and \eqref{px6} into \eqref{px7}-\eqref{px8s}, respectively, we get \eqref{px1}.
  The proof of Lemma \ref{lem5} is complete.
\end{proof}

\begin{theorem}\label{th1}
If $M(\lambda)=\widetilde{M}(\lambda),$ then $Q=\widetilde{Q}$, $h=\widetilde{h}$ and $A=\widetilde{A}$.
\end{theorem}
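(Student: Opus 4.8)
The plan is to first recover $A$ from the leading asymptotics of the Weyl matrix, and then run the spectral--mapping argument built on the block matrix $P(x,\lambda)$ of Lemma~\ref{lem5} to recover $\varphi$, and from it $h$ and $Q$.

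\textbf{Recovering $A$.} Since $(i\rho A-A^{\perp})^{-1}=\frac{A}{i\rho}-A^{\perp}$ by \eqref{xj3} and $AA^{\perp}=A^{\perp}A=0_{n}$, one computes $(A+i\rho A^{\perp})(i\rho A-A^{\perp})^{-1}=\frac{A}{i\rho}-i\rho A^{\perp}$, while the bracket in \eqref{yj217} equals $I_{n}+O(1/\rho)$ (as $h/(i\rho)=O(1/\rho)$ and $\kappa(\rho)=O(1)$, because $|e^{2i\rho t}|\le 1$ for ${\rm Im}\,\rho\ge 0$). Hence $M(\lambda)=-i\rho A^{\perp}+O(1)$ as $|\rho|\to\infty$ in $\Omega$, so $A^{\perp}=-\lim_{|\rho|\to\infty}(i\rho)^{-1}M(\lambda)$. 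Doing the same for $\widetilde{M}$ and using $M=\widetilde{M}$ yields $A^{\perp}=\widetilde{A}^{\perp}$, i.e.\ $A=\widetilde{A}$. From now on Lemma~\ref{lem5} applies, so $P(x,\lambda)$ satisfies the estimates \eqref{px1}.

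\textbf{Entirety of $P$ and Liouville.} A priori the entries \eqref{px7}--\eqref{px8s} of $P(x,\lambda)$ have poles at $\Lambda\cup\widetilde{\Lambda}$ coming from $\phi,\widetilde{\phi},\widetilde{\phi}^{*}$; the key point is that these cancel once $M=\widetilde{M}$. Substituting $\phi=S+\varphi M$, $\widetilde{\phi}=\widetilde{S}+\widetilde{\varphi}\widetilde{M}$ from \eqref{k15} and $\widetilde{\phi}^{*}=\widetilde{S}^{*}+\widetilde{M}^{*}\widetilde{\varphi}^{*}$ from \eqref{pp15}, and using $\widetilde{M}^{*}=\widetilde{M}$ from \eqref{k17} together with $M=\widetilde{M}$, the $M$-terms cancel pairwise and one obtains, e.g.,
\[
P_{11}(x,\lambda)=I_{n}+(\varphi-\widetilde{\varphi})\widetilde{S}^{*'}-(S-\widetilde{S})\widetilde{\varphi}^{*'},\qquad
P_{12}(x,\lambda)=(S-\widetilde{S})\widetilde{\varphi}^{*}-(\varphi-\widetilde{\varphi})\widetilde{S}^{*},
\]
with analogous formulas for $P_{21},P_{22}$. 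The right-hand sides involve only $\varphi,S,\varphi^{*},S^{*}$, their tilded counterparts and $x$-derivatives, all of which are entire and single-valued in $\lambda$ for each fixed $x$; hence so is $P_{jk}(x,\cdot)$. By \eqref{px1} the matrix $P(x,\lambda)$ is bounded for $|\lambda|$ large and, being entire, bounded on compact sets, so Liouville's theorem forces $P(x,\lambda)\equiv P(x)$, independent of $\lambda$. Passing to the limit $|\rho|\to\infty$ in \eqref{px1} then gives $P_{11}(x)=P_{22}(x)=I_{n}$ and $P_{12}(x)=0_{n}$.

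\textbf{Recovering $h$ and $Q$.} By \eqref{k46}, $P(x,\lambda)$ times the tilded block matrix equals the untilded one; its first block row, with $P_{11}=I_{n}$ and $P_{12}=0_{n}$, gives $\varphi(x,\lambda)=\widetilde{\varphi}(x,\lambda)$ and $\phi(x,\lambda)=\widetilde{\phi}(x,\lambda)$ for all $x\ge 0$. Differentiating $\varphi=\widetilde{\varphi}$ in $x$ and setting $x=0$, the initial data \eqref{yj} (with $\widetilde{A}=A$) give $A^{\perp}+h=A^{\perp}+\widetilde{h}$, so $h=\widetilde{h}$. Finally, $\varphi=\widetilde{\varphi}$ and $\phi=\widetilde{\phi}$ together with \eqref{k15} and $M=\widetilde{M}$ force $S=\widetilde{S}$; inserting $\varphi=\widetilde{\varphi}$, $S=\widetilde{S}$ into the equations $-\varphi''+Q\varphi=\lambda\varphi$, $-\widetilde{\varphi}''+\widetilde{Q}\widetilde{\varphi}=\lambda\widetilde{\varphi}$ (and the analogous ones for $S$) yields $(Q-\widetilde{Q})\varphi=0_{n}$ and $(Q-\widetilde{Q})S=0_{n}$, and since $\varphi,S$ form a fundamental system (noted right before \eqref{k15}) the $n\times 2n$ matrix $[\varphi\ S]$ has rank $n$, whence $Q=\widetilde{Q}$ a.e.

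\textbf{Main obstacle.} The decisive step is the entirety of $P(x,\lambda)$: one must verify that the poles of the Weyl solutions at $\Lambda$ and $\widetilde{\Lambda}$ genuinely drop out when $M=\widetilde{M}$, which is exactly where the self-consistency identity $M=M^{*}$ of \eqref{k17} and the representations \eqref{k15}, \eqref{pp15} are essential. The uniform bound \eqref{px1} needed to invoke Liouville is already supplied by Lemma~\ref{lem5} (and ultimately by the Jost-matrix asymptotics of Lemma~\ref{lemA2}), and everything after Liouville is elementary linear algebra.
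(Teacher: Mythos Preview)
Your proof is correct and follows essentially the same route as the paper: recover $A$ from the leading asymptotics \eqref{yj217}, use $M=\widetilde{M}$ together with \eqref{k15}, \eqref{pp15}, \eqref{k17} to rewrite $P_{11},P_{12}$ in terms of the entire solutions $\varphi,S,\widetilde{\varphi}^{*},\widetilde{S}^{*}$ only, and then combine entirety with the bounds of Lemma~\ref{lem5} and Liouville to force $P_{11}\equiv I_n$, $P_{12}\equiv 0_n$, whence $\varphi=\widetilde{\varphi}$ and the conclusion. Your derivation of the entire expressions starts from \eqref{px7}--\eqref{px8} rather than directly from \eqref{k48}--\eqref{k49} as the paper does, and you give a slightly more detailed justification of $Q=\widetilde{Q}$ via the rank of $[\varphi\ S]$, but these are cosmetic differences.
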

\begin{proof}
%Let us prove this theorem by proving several lemmas. Firstly, let us show the following lemma.
Let us first show that $A$ is uniquely determined from $M(\lambda)$. Indeed, using the first equation in \eqref{yj111} and \eqref{yj217}, we have
\begin{align*}
   M(\lambda)=  &N\left[
                   \begin{array}{cc}
                     I_{r} & 0 \\
                     0 & i\rho I_{n-r}  \\
                   \end{array}
                 \right] N^\dagger N\left(I_{n}+N^\dagger O(1/\rho)N\right)N^\dagger N \left[
                   \begin{array}{cc}
                     \frac{1}{i\rho}I_{r} & 0 \\
                     0 & -I_{n-r}  \\
                   \end{array}
                 \right]  N^\dagger \\
                 =&N\left[
                   \begin{array}{cc}
                     I_{r} & 0 \\
                     0 & i\rho I_{n-r}  \\
                   \end{array}
                 \right] \left(I_{n}+N^\dagger O(1/\rho)N\right)\left[
                   \begin{array}{cc}
                     \frac{1}{i\rho}I_{r} & 0 \\
                     0 & -I_{n-r}  \\
                   \end{array}
                 \right]  N^\dagger\\
                 =&N\left[
                   \begin{array}{cc}
                     \frac{1}{i\rho}I_{r} & 0 \\
                     0 & -i\rho I_{n-r}  \\
                   \end{array}
                 \right]  N^\dagger+ O(1)=-i\rho A^\perp +O(1),\quad |\rho|\to\infty,\quad \rho\in \Omega,
\end{align*}
which implies
\begin{equation}\label{px2}
 A=I_n- \lim_{|\rho|\to\infty,\rho\in\Omega } \frac{M(\lambda)}{-i\rho}.
\end{equation}

Due to \eqref{k48}, \eqref{k15}, \eqref{pp15} and \eqref{k17}, we have
\begin{equation}\label{k53}
\begin{aligned}
 P_{11}(x,\lambda)= \varphi(x,\lambda)\widetilde{S}^{*'}(x,\lambda)-S(x,\lambda)&\widetilde{\varphi}^{*'}(x,\lambda)\\&
 +\varphi(x,\lambda)(\widetilde{M}(\lambda)-M(\lambda))\varphi^{*'}(x,\lambda),  \end{aligned}
 \end{equation}
\begin{equation}\label{k54}
\begin{aligned}
  P_{12}(x,\lambda)=S(x,\lambda)\widetilde{\varphi}^{*'}(x,\lambda)-\varphi(x,\lambda)&\widetilde{S}^{*}(x,\lambda)\\&
 +\varphi(x,\lambda)(M(\lambda)-\widetilde{M}(\lambda))\varphi^{*}(x,\lambda).
  \end{aligned}
 \end{equation}
Since $M= \widetilde{M}$, we have that $P_{11}(x,\lambda)$ and$P_{12}(x,\lambda)$ are both entire functions of $\lambda$ of order $\le \frac{1}{2}$. Using the Phragm\'{e}n-Lindel\"{o}f theorem (or the maximum modulus principle) and the Liouville theorem, together with Lemma \ref{lem5},  we conclude
\begin{equation}\label{k56}
 P_{11}(x,\lambda)\equiv I_n, \quad P_{12}(x,\lambda)\equiv 0_{n},
\end{equation}
which implies from \eqref{k47} that $\varphi(x,\lambda)=\widetilde{\varphi}(x,\lambda)$, and so $h=\widetilde{h}$ and  $Q(x)=\widetilde{Q}(x)$. The proof of Theorem \ref{th1} is complete.
\end{proof}
\section{Solution of the inverse problem}
In this section, we establish the main equation and show its solvability, which yields a theoretical algorithm for
Inverse Problem \ref{ip1}.
We choose an arbitrary model boundary value problem $\widetilde{L}=L(\widetilde{Q},A,\widetilde{h})$, where $A$ is obtained from \eqref{px2}. For example, we can take $\widetilde{Q}=0_n$ and $\widetilde{h}=0_n$.
Denote
\begin{equation}\label{yj41}
    D(x,\lambda,\mu):=\frac{\left\langle\varphi^{*}(x,\mu),\varphi(x,\lambda)\right\rangle}{\lambda-\mu}=\int_0^x \varphi^{*}(t,\mu)\varphi(t,\lambda)dt,
    \end{equation}
    \begin{equation}\label{yj41s}
     \widetilde{D}(x,\lambda,\mu):=\frac{\left\langle\widetilde{\varphi}^{*}(x,\mu),\widetilde{\varphi}(x,\lambda)\right\rangle}{\lambda-\mu}=\int_0^x \widetilde{\varphi}^{*}(t,\mu)\widetilde{\varphi}(t,\lambda)dt,
    \end{equation}
\begin{equation}\label{f3}
  r(x,\lambda,\mu):=\hat{M}(\mu)D(x,\lambda,\mu),\quad \widetilde{r}(x,\lambda,\mu):=\hat{M}(\mu)\widetilde{D}(x,\lambda,\mu),\quad \hat{M} =M-\widetilde{M}.
\end{equation}
Let $\mu=\tau^2$. Using \eqref{yj41s} and the similar estimate to \eqref{k23} for $\tilde{\varphi}^{*}(x,\lambda)$, we have that if $\widetilde{Q}=0_n$ and $\widetilde{h}=0_n$ then
\begin{align*}
 \notag \widetilde{D}(x,\lambda,\mu)=\int_0^x \widetilde{\varphi}^{*}(t,\mu)\widetilde{\varphi}(t,\lambda)dt
 %\notag=&\int_0^{x}  \left(A\cos \tau t-A^{\perp}\frac{\sin\tau t}{\tau}\right)\left(A\cos \rho t+A^{\perp}\frac{\sin\rho t}{\rho}\right)dt\\
 %\notag=&\frac{A\tau\rho+A^{\perp}}{2\tau\rho(\rho+\tau)}\int_0^{x}\cos(\rho+\tau)tdt+\frac{A\tau\rho-A^{\perp}}{2\tau\rho(\rho-\tau)}\int_0^{x}\cos(\rho-\tau)tdt\\
=\frac{A\tau\rho-A^{\perp}}{2\tau\rho(\rho+\tau)} \sin(\rho+\tau )x+ \frac{A\tau\rho+A^{\perp}}{2\tau\rho(\rho-\tau)} \sin(\rho-\tau )x.
\end{align*}
For the general $\widetilde{Q}$ and $\widetilde{h}$, the calculation is a little complicated. But it is easy to get that
\begin{align}\label{pp43}
\notag \widetilde{D}(x,\lambda,\mu)=&A \left[ O\left(\frac{e^{(|{\rm Im}\tau |+|{\rm Im}\rho |)x}}{|\rho\pm \tau|}\right)\cdot A+O\left(\frac{e^{(|{\rm Im}\tau |+|{\rm Im}\rho |)x}}{|\rho(\rho\pm \tau)|}\right)\cdot A^\perp\right]\\
&+A^\perp\left[ O\left(\frac{e^{(|{\rm Im}\tau |+|{\rm Im}\rho |)x}}{|\tau (\rho\pm\tau)|}\right)\cdot A+O\left(\frac{e^{(|{\rm Im}\tau |+|{\rm Im}\rho |)x}}{|\rho\tau (\rho\pm\tau)|}\right)\cdot A^\perp\right]
\end{align}
for each fixed $\rho\in \Omega$ and $|\tau|\to \infty$ in $\Omega$ or for each fixed $\tau\in \Omega$ and $|\rho|\to \infty$ in $\Omega$.
The estimate \eqref{yj217} implies that for $\lvert\tau\rvert\to\infty $ in $\Omega$,
\begin{equation}\label{pp44}
  \hat{M}(\mu)=M(\mu)-\widetilde{M}(\mu)=(A+i\tau A^{\perp})O(1/\tau)\left(i\tau A-A^\perp \right)^{-1}.
\end{equation}
Substituting \eqref{pp43} and \eqref{pp44} into \eqref{f3}, and noting \eqref{xj3}, we get
\begin{align}\label{px18}
\notag\widetilde{r}(x,\lambda,\mu)=(A+i\tau A^{\perp})&O(1/\tau)\left\{A \left[ O\left(\frac{e^{(|{\rm Im}\tau |+|{\rm Im}\rho |)x}}{|\tau(\rho\pm \tau)|}\right)\cdot A+O\left(\frac{e^{(|{\rm Im}\tau |+|{\rm Im}\rho |)x}}{|\rho\tau(\rho\pm \tau)|}\right)\cdot A^\perp\right]\right.\\
&\left.+A^\perp\left[ O\left(\frac{e^{(|{\rm Im}\tau |+|{\rm Im}\rho |)x}}{|\tau (\rho\pm\tau)|}\right)\cdot A+O\left(\frac{e^{(|{\rm Im}\tau |+|{\rm Im}\rho |)x}}{|\rho\tau (\rho\pm\tau)|}\right)\cdot A^\perp\right]\right\}.
\end{align}
Using the fact that $A(A+i\tau A^{\perp})=A$, $\tau^{-1} A^\perp(A+i\tau A^{\perp})=iA^{\perp}$, we get from \eqref{px18} that
for each $\rho\in\Omega$,
\begin{equation}\label{yj44}
 \left[O\left(1\right)A+O\left(1/\tau\right) A^{\perp}\right] \widetilde{r}(x,\lambda,\mu)=O\left(\frac{e^{(|{\rm Im}\tau |+|{\rm Im}\rho |)x}}{\lvert\tau\rvert^{2}|\rho\pm \tau|}\right),\quad \lvert\tau\rvert\to\infty ,\quad \tau\in \Omega.
    \end{equation}
The estimate of $r(x,\lambda,\mu)$ similar to \eqref{yj44} is also valid.

In the $\lambda$-plane we consider the contour $\gamma=\gamma'\cup\gamma''$ with counterclockwise circuit   (see Figure 4.1), where $\gamma'$ is a bounded closed contour encircling the set $\Lambda\cup\widetilde{\Lambda}\cup\{0\}$, and $\gamma''$ is the two-sided cut along the arc$\{\lambda:\lambda>0,\lambda\notin {\rm int}\gamma'\}$. \begin{figure}[!hbt]\center\includegraphics[scale=0.4,angle=0]{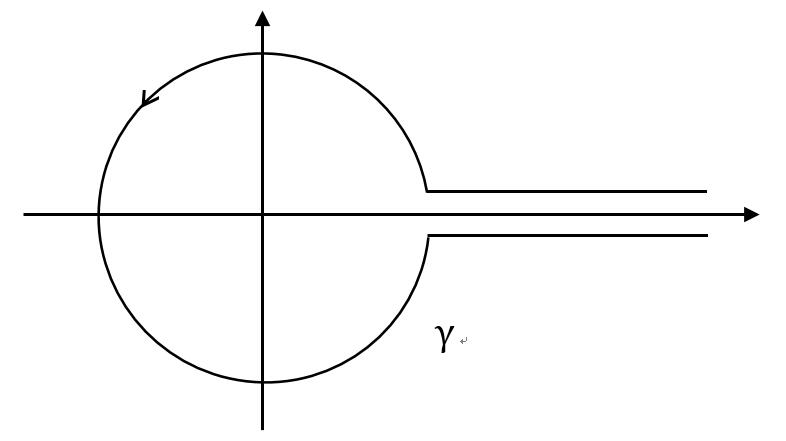} \caption{The contour $\gamma$}\end{figure}
\begin{theorem}\label{th2}
    The following relation holds
    \begin{equation}\label{yj43}
     \widetilde{\varphi} (x,\lambda)=\varphi(x,\lambda)+\frac{1}{2\pi i}\int_\gamma\varphi(x,\mu)\widetilde{r}(x,\lambda,\mu)d\mu .
    \end{equation}
    The equation \eqref{yj43} is called the main equation of Inverse Problem \ref{ip1}.
\end{theorem}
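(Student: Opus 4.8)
The plan is to derive \eqref{yj43} by the method of spectral mapping, starting from the block matrix $P(x,\lambda)$ of \eqref{k46} and a contour integration over $\gamma$. Since the model problem $\widetilde L=L(\widetilde Q,A,\widetilde h)$ is chosen with $\widetilde A=A$, Lemma~\ref{lem5} applies and gives $P_{11}(x,\lambda)=I_n+O(1/\rho)$ and $P_{12}(x,\lambda)=O(1/\rho)$ as $|\rho|\to\infty$ in $\Omega$. From the definition \eqref{k46}, multiplying on the right by its second factor and extracting the $(1,1)$ block yields the key relation
\[
\varphi(x,\lambda)=P_{11}(x,\lambda)\widetilde\varphi(x,\lambda)+P_{12}(x,\lambda)\widetilde\varphi'(x,\lambda),
\]
so that \eqref{yj43} will follow once we show
\[
\frac1{2\pi i}\int_\gamma\varphi(x,\mu)\widetilde r(x,\lambda,\mu)\,d\mu=\big(I_n-P_{11}(x,\lambda)\big)\widetilde\varphi(x,\lambda)-P_{12}(x,\lambda)\widetilde\varphi'(x,\lambda).
\]

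To evaluate the left-hand side I would substitute $\widetilde D(x,\lambda,\mu)=\langle\widetilde\varphi^{*}(x,\mu),\widetilde\varphi(x,\lambda)\rangle/(\lambda-\mu)$ from \eqref{yj41s} into $\widetilde r=\hat M\widetilde D$, so that $\varphi(x,\mu)\widetilde r(x,\lambda,\mu)$ becomes a combination, with right coefficients $\widetilde\varphi(x,\lambda)$ and $\widetilde\varphi'(x,\lambda)$, of the quotients $\varphi(x,\mu)\hat M(\mu)\widetilde\varphi^{*(\nu)}(x,\mu)/(\lambda-\mu)$, $\nu=0,1$; by \eqref{k53}--\eqref{k54} each numerator here equals $\pm\bigl(P_{1k}(x,\mu)-E_{1k}(x,\mu)\bigr)$ with $E_{1k}(x,\cdot)$ entire. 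For $\lambda$ in the unbounded component of the complement of $\gamma$ I would then argue: (i) since $E_{1k}(x,\cdot)$ is entire, $E_{1k}(x,\mu)/(\lambda-\mu)$ takes equal values on the two sides of $\gamma''$, which are traversed in opposite directions, so their contributions cancel, while $\gamma'$ encircles no pole of $E_{1k}(x,\mu)/(\lambda-\mu)$; hence $\frac1{2\pi i}\int_\gamma E_{1k}(x,\mu)/(\lambda-\mu)\,d\mu=0$; (ii) by Lemma~\ref{lem5}, $P_{1k}(x,\cdot)-\delta_{1k}I_n$ is analytic outside $\gamma$ and $O(1/\rho)$ there, so pushing $\gamma$ out to a large circle (whose contribution vanishes) and collecting the residue at $\mu=\lambda$ gives $\frac1{2\pi i}\int_\gamma P_{1k}(x,\mu)/(\lambda-\mu)\,d\mu=P_{1k}(x,\lambda)-\delta_{1k}I_n$, the constant $\delta_{1k}I_n$ again integrating to $0$ by the cut cancellation. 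Combining (i) and (ii) produces exactly the displayed identity, hence \eqref{yj43} for $\lambda$ outside $\gamma$; finally, since $\widetilde D(x,\lambda,\mu)=\int_0^x\widetilde\varphi^{*}(t,\mu)\widetilde\varphi(t,\lambda)\,dt$ is entire in $\lambda$ and the $\gamma$-integral converges uniformly on compacta in $\lambda$, both sides of \eqref{yj43} are entire in $\lambda$ and therefore agree for all $\lambda$.

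The main obstacle is that $\gamma$ is unbounded, running out to $+\infty$ along both banks of the cut, so the convergence of $\int_\gamma$ and the legitimacy of splitting it into the pieces (i) and (ii) and applying Cauchy's theorem to each are not automatic: along the real $\mu$-axis the combination $\varphi(x,\mu)\hat M(\mu)\widetilde\varphi^{*(\nu)}(x,\mu)$ decays — the estimates \eqref{pp43}, \eqref{pp44}, \eqref{yj44} give $\varphi(x,\mu)\widetilde r(x,\lambda,\mu)=O(|\tau|^{-3})$ there, so $\int_\gamma$ converges absolutely — whereas its constituents $E_{1k}$ and $P_{1k}$ are only bounded; thus the split in (ii) must be carried out via symmetric truncation of $\gamma$, the divergent tails of $\int_\gamma E_{1k}/(\lambda-\mu)$ and of $\int_\gamma P_{1k}/(\lambda-\mu)$ cancelling one another. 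This is precisely where the general boundary condition \eqref{k2} asks for more than the Robin case of \cite{FY}: the noncommutative block structure induced by the projector $A$ — the factor $(i\rho A-A^\perp)^{-1}$ scales like $1/\rho$ on $\mathrm{ran}\,A$ but like $1$ on $\mathrm{ran}\,A^\perp$, cf. \eqref{xj3} and \eqref{yj217} — forces one to track the asymptotics of $\varphi$, $S$, $\widetilde\varphi^{*}$, $\widetilde S^{*}$ and of $\hat M$ separately on the $A$- and $A^\perp$-components, both as $|\rho|\to\infty$ and near $\rho=0$, which is exactly what the preparatory estimates \eqref{pp43}--\eqref{yj44} are designed to supply.
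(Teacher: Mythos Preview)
Your proposal is correct and follows essentially the same route as the paper. The paper also starts from $\varphi(x,\lambda)=P_{11}(x,\lambda)\widetilde\varphi(x,\lambda)+P_{12}(x,\lambda)\widetilde\varphi'(x,\lambda)$, first establishes the Cauchy--type representation $P_{1k}(x,\lambda)=\delta_{1k}I_n+\tfrac1{2\pi i}\int_\gamma P_{1k}(x,\mu)/(\lambda-\mu)\,d\mu$ by applying Cauchy's formula on the bounded contour $\gamma_R^0=\gamma_R\cup\{|\mu|=R\}$ and letting the large--circle contribution vanish via Lemma~\ref{lem5}, and then substitutes \eqref{k48}--\eqref{k49}, expands $\phi=S+\varphi M$ and $\widetilde\phi^{*}=\widetilde S^{*}+\widetilde M\widetilde\varphi^{*}$, and drops the entire (in $\mu$) terms by Cauchy's theorem. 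Your steps (i) and (ii) are exactly these two ingredients, only presented in the reverse order (you start from $\int_\gamma\varphi\,\widetilde r\,d\mu$ and split, rather than first proving the integral formula for $P_{1k}$ and then reassembling).

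One small remark: your last paragraph overstates the difficulty. By your own argument~(i), for an entire $E_{1k}$ the two banks of $\gamma''$ cancel and $\gamma'$ encloses no singularity, so $\int_{\gamma_R}E_{1k}(x,\mu)/(\lambda-\mu)\,d\mu=0$ for \emph{every} truncation $R$, not merely in a principal--value limit; there are no ``divergent tails'' to cancel against those of $P_{1k}$. The only genuinely delicate point is the vanishing of the large--circle piece of the $P_{1k}$ integral, and that is precisely what Lemma~\ref{lem5} (together with the mixed $A/A^\perp$ estimates \eqref{pp43}--\eqref{yj44}) supplies.
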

\begin{proof}
    It follows from \eqref{k23} and \eqref{yj44} that the integral in \eqref{yj43} converges absolutely and uniformly on $\gamma$, for each fixed $x\ge 0$. Denote $J_{\gamma}=\left\{\lambda:\lambda\notin \gamma\cup {\rm int}\gamma'\right \}$,$\gamma_{R}=\gamma\cap\{\lambda:\lvert\lambda\rvert\le R\}$, and $\gamma_{R}^{0}=\gamma_{R}\cup\{\lambda:\lvert\lambda\rvert= R\}$ (see Figure 4.2). \begin{figure}[!hbt]\center\includegraphics[scale=0.35,angle=0]{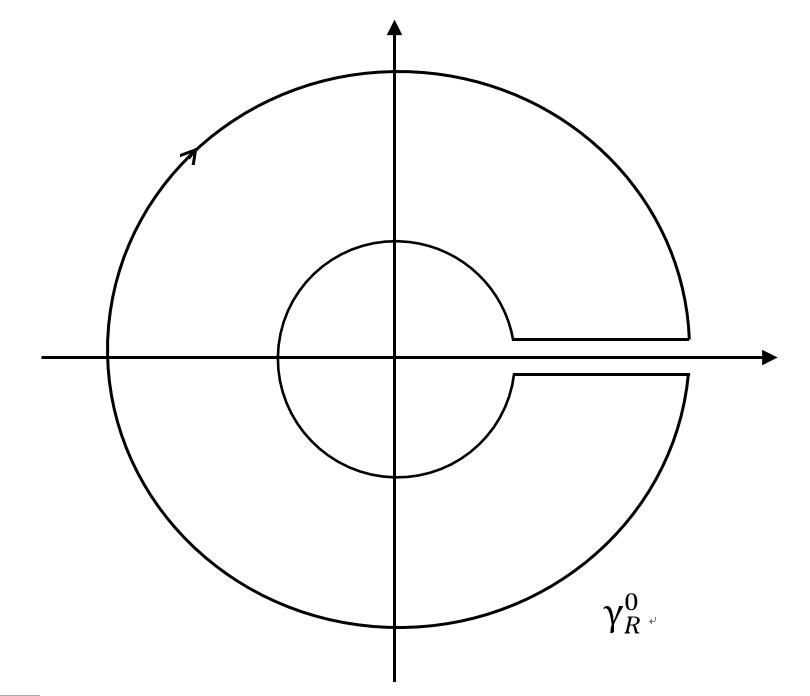}\caption{The contour $\gamma_{R}^{0}$}\end{figure}
    Using  Cauchy's integral formula, we have
    \begin{equation}\label{px22}
        P_{1k}(x,\lambda)-\delta_{1k}I_{n}=\frac{1}{2\pi i}\int_{\gamma_{R}^{0}}\frac{P_{1k}(x,\mu)-\delta_{1k}I_{n}}{\lambda-\mu}d\mu,\quad \lambda\in {\rm int} \gamma_{R}^{0},\quad k=1,2.
    \end{equation}
    Using Lemma \ref{lem5}, we get
    \begin{equation}\label{px21}
    \lim_{R\to \infty}\int_{\lvert \mu \rvert=R} \frac{P_{1k}(x,\mu)-\delta_{1k}I_{n}}{\lambda-\mu}d\mu = 0_n,\quad k=1,2.
    \end{equation}
  By  Cauchy's theorem, we have
 \begin{equation}\label{px20}
     \frac{1}{2\pi i}\int_\gamma\frac{d\mu}{\lambda-\mu}=0,\quad \lambda\in J_{\gamma}.
 \end{equation}
Using \eqref{px21} and \eqref{px20} in \eqref{px22}, we have
 \begin{equation}\label{yj417}
      P_{1k}(x,\lambda)=\delta_{1k}I_{n}+\frac{1}{2\pi i}\int_\gamma\frac{P_{1k}(x,\mu)}{\lambda-\mu}d\mu,\quad \lambda\in J_{\gamma},\quad k=1,2,
 \end{equation}
where the integral is understood in the sense of principal value.
Using \eqref{k46}, we have
    \begin{equation}\label{px23}
       \varphi(x,\lambda)=P_{11}(x,\lambda)\widetilde{\varphi}(x,\lambda)+P_{12}(x,\lambda)\widetilde{\varphi}'(x,\lambda).
    \end{equation}
    Substituting \eqref{px22} into \eqref{px23}, we get
    \begin{align}
     \notag   \varphi(x,\lambda)=&\left(I_{n}+\frac{1}{2\pi i}\int_\gamma\frac{P_{11}(x,\lambda)}{\lambda-\mu }d\mu\right)\widetilde{\varphi}(x,\lambda)+\left(\frac{1}{2\pi i}\int_\gamma\frac{P_{12}(x,\lambda)}{\lambda-\mu }d\mu\right)\widetilde{\varphi}'(x,\lambda)\\
       =&\widetilde{\varphi}(x,\lambda)+\frac{1}{2\pi i}\int_\gamma\frac{P_{11}(x,\mu)\widetilde{\varphi}(x,\lambda)+P_{12}(x,\mu)\widetilde{\varphi}'(x,\lambda)}{\lambda-\mu }d\mu.
    \end{align}
 Taking \eqref{k48} and \eqref{k49} into account, we get
 \begin{equation}\label{f15}
\begin{aligned}
\varphi(x,\lambda)=\widetilde{\varphi}(x,\lambda)+\frac{1}{2\pi i}&\int_\gamma[(\varphi(x,\mu)\widetilde{\phi}^{*'}(x,\mu)-\phi(x,\mu)\widetilde{\varphi}^{*'}(x,\mu))\widetilde{\varphi}(x,\lambda)
\\&+(\phi(x,\mu)\widetilde{\varphi}^{*}(x,\mu)-\varphi(x,\mu)\widetilde{\phi}^{*}(x,\mu))\widetilde{\varphi}'(x,\lambda)]\frac{d\mu}{\lambda-\mu}.
\end{aligned}
 \end{equation}
 Using \eqref{pp15}, \eqref{k17}, \eqref{k15}, the integral term in the right-hand side of \eqref{f15} equals
 \begin{align}\label{f16}
  %\notag &\frac{1}{2\pi i}\int_\gamma[(\varphi(x,\mu)\widetilde{\phi}^{*'}(x,\mu)-\phi(x,\mu)\widetilde{\varphi}^{*'}(x,\mu))\widetilde{\varphi}(x,\lambda)\\
  % \qquad  \qquad  &+(\phi(x,\mu)\widetilde{\varphi}^{*}(x,\mu)
 % \notag -\varphi(x,\mu)\widetilde{\phi}^{*}(x,\mu))\widetilde{\varphi}'(x,\lambda)]\frac{d\mu}{\lambda-\mu} \\
  \notag &\frac{1}{2\pi i}\int_{\gamma}[\varphi(x,\mu)\widetilde{M}(\mu )\widetilde{\varphi}^{*'}(x,\mu)-\varphi(x,\mu )M(\mu)\widetilde{\varphi}^{*'}(x,\mu)]\widetilde{\varphi}(x,\lambda)\frac{d\mu}{\lambda-\mu}\\
 \notag  &+\frac{1}{2\pi i}\int_{\gamma}[\varphi(x,\mu)M(\mu )\widetilde{\varphi}^{*}(x,\mu)-\varphi(x,\mu )\widetilde{M}(\mu)\widetilde{\varphi}^{*}(x,\mu)]\widetilde{\varphi}'(x,\lambda)\frac{d\mu}{\lambda-\mu}\\
 %\notag =&\frac{1}{2 \pi i}\int_{\gamma}[\varphi(\widetilde{M}-M)\widetilde{\varphi}^{*'}\widetilde{\varphi}]+[\varphi(M-\widetilde{M})\widetilde{\varphi}^{*}\widetilde{\varphi}']\frac{d\mu}{\lambda-\mu} \\
   =&\frac{1}{2\pi i}\int_{\gamma}\varphi(x,\mu) (\widetilde{M}(\mu)-M(\mu))\frac{\left\langle\widetilde{\varphi}^{*}(x,\mu),\widetilde{\varphi}(x,\lambda)\right\rangle}{\lambda-\mu}d\mu.
 \end{align}
Here the terms with $S(x,\lambda)$ and $S^{*}(x,\lambda)$ vanish, since they are entire functions of $\lambda$ and the Cauchy's theorem is used.
 Substituting\eqref{f16} with \eqref{f3} and \eqref{yj41s} into \eqref{f15},  we get \eqref{yj43}.
The proof is complete.
\end{proof}

\begin{lemma}\label{th3}
 The following relations hold
 \begin{equation}\label{yj421}
     \widetilde{r}(x,\lambda,\mu)-r(x,\lambda,\mu)-\frac{1}{2\pi i}\int_\gamma  r(x,\xi,\mu)\widetilde{r}(x,\lambda,\xi)d\xi=0_n,
 \end{equation}
  \begin{equation}\label{px3}
     \widetilde{r}(x,\lambda,\mu)-r(x,\lambda,\mu)-\frac{1}{2\pi i}\int_\gamma  \widetilde{r}(x,\xi,\mu){r}(x,\lambda,\xi)d\xi=0_n.
 \end{equation}
\end{lemma}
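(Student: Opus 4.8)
The plan is to run, for the three‑parameter kernels $r$ and $\widetilde r$, the same Cauchy–integral argument that gave the main equation in Theorem~\ref{th2}. It is convenient to read the two identities as operator statements: if $\mathcal R,\widetilde{\mathcal R}$ denote the integral operators on $n\times n$‑matrix functions over $\gamma$ acting by $(\mathcal R\psi)(\lambda)=\frac{1}{2\pi i}\int_\gamma\psi(\mu)r(x,\lambda,\mu)\,d\mu$ (and likewise $\widetilde{\mathcal R}$), then \eqref{yj421} is $(I+\widetilde{\mathcal R})(I-\mathcal R)=I$ and \eqref{px3} is $(I-\mathcal R)(I+\widetilde{\mathcal R})=I$, i.e.\ $I-\mathcal R$ is a two‑sided inverse of $I+\widetilde{\mathcal R}$ --- which is exactly the invertibility needed in Section~4 to solve the main equation \eqref{yj43}. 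I would prove \eqref{yj421} directly and then obtain \eqref{px3} by interchanging the roles of $L$ and $\widetilde L$.

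To prove \eqref{yj421}, I would first rewrite, using \eqref{f3}, \eqref{yj41}, \eqref{k53}, \eqref{k54} (and their $P_{21},P_{22}$ analogues) together with \eqref{k15}, \eqref{pp15}, \eqref{k17}, the integrand $r(x,\xi,\mu)\widetilde r(x,\lambda,\xi)=\hat M(\mu)\,\dfrac{\langle\varphi^{*}(x,\mu),\varphi(x,\xi)\rangle}{\xi-\mu}\,\widetilde r(x,\lambda,\xi)$ so that every factor is evaluated at the single endpoint $x$, and establish the pointwise identity
\[
\varphi^{(\nu)}(x,\xi)\,\widetilde r(x,\lambda,\xi)=\frac{\varphi^{(\nu)}(x,\xi)-P_{\nu+1,1}(x,\xi)\widetilde\varphi(x,\lambda)-P_{\nu+1,2}(x,\xi)\widetilde\varphi'(x,\lambda)}{\lambda-\xi}+\mathcal H_{\nu}(x,\lambda,\xi),\qquad\nu=0,1,
\]
where each $\mathcal H_{\nu}$ is entire in $\xi$ and, by \eqref{px23} and its $\nu=1$ counterpart, the numerator vanishes at $\xi=\lambda$. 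Substituting this into $\frac{1}{2\pi i}\int_\gamma r(x,\xi,\mu)\widetilde r(x,\lambda,\xi)\,d\xi$ (which converges by the estimates preceding Theorem~\ref{th2}), splitting $[(\lambda-\xi)(\xi-\mu)]^{-1}$ by partial fractions, and evaluating the contour integrals term by term --- on $\gamma_R^{0}$ and letting $R\to\infty$, exactly as in the proof of Theorem~\ref{th2} --- the pieces built from $\varphi^{(\nu)}$ and $\mathcal H_{\nu}$ drop out because these functions are entire (Cauchy's theorem, as in \eqref{px20}), while the pieces built from $P_{jk}$ are computed from \eqref{yj417} and the asymptotics \eqref{px1}; after reassembling and invoking \eqref{px23} once more, the double integral collapses to $\widetilde r(x,\lambda,\mu)-r(x,\lambda,\mu)$, which is \eqref{yj421}.

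Relation \eqref{px3} then follows by applying \eqref{yj421} to the pair in which $\widetilde L$ plays the role of the unknown and $L=L(Q,A,h)$ plays the role of the model (admissible, since $L$ has the required form $L(\cdot,A,\cdot)$, the set $\Lambda\cup\widetilde\Lambda\cup\{0\}$ is unchanged, and Lemma~\ref{lem5} still applies). Under this interchange $\hat M\mapsto-\hat M$, hence $r\mapsto-\widetilde r$ and $\widetilde r\mapsto-r$, and the interchanged \eqref{yj421} reads $(-r)-(-\widetilde r)-\frac{1}{2\pi i}\int_\gamma(-\widetilde r(x,\xi,\mu))(-r(x,\lambda,\xi))\,d\xi=0$, which is precisely \eqref{px3}.

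The hard part is the bookkeeping in the non‑self‑adjoint matrix setting rather than any single idea. Since $r,\widetilde r$ genuinely depend on $x$ through $\int_0^{x}$, one is forced to work with the Wronskian forms of $D,\widetilde D$ so that nothing is evaluated at a ``running'' point; every matrix factor --- including the positions of $\hat M$ and of $M$ relative to $\varphi$ and $\varphi^{*}$ --- must be kept in the correct non‑commutative order ($\varphi^{*}\ne\varphi^{\top}$, $M^{*}=M$ but $M\ne M^{\dagger}$ in general); and the delicate step is to recognise precisely which combinations are entire in the spectral variable (hence disappear under $\int_\gamma$ by Cauchy's theorem) versus which are analytic only outside $\gamma$ with a finite limit at infinity (hence contribute through the Cauchy formula exactly as in \eqref{yj417}). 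The weaker decay $P_{21}=O(1)$ (against $o(1)$ for $P_{11},P_{12}$) has to be tracked with care in the limit $R\to\infty$ and in the partial‑fraction step, where its limiting value enters and the corresponding terms must be seen to cancel.
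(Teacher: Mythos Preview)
Your approach is essentially the one the paper uses. For \eqref{yj421} the paper simply cites Lemma~4.2 in \cite{FY}, whose proof is precisely the Cauchy--integral computation you sketch: expressing $\varphi^{(\nu)}(x,\xi)\hat M(\xi)\widetilde\varphi^{*(\sigma)}(x,\xi)$ via \eqref{k53}--\eqref{k54} (and their $j=2$ analogues) as $P_{\nu+1,k}$ plus an entire remainder, then evaluating the $\xi$--integral from \eqref{yj417} and Cauchy's theorem. For \eqref{px3} your interchange argument --- observing that swapping $L\leftrightarrow\widetilde L$ sends $\hat M\mapsto-\hat M$, hence $r\mapsto-\widetilde r$, $\widetilde r\mapsto-r$ --- is exactly what the paper does (there written in the equivalent notation $r_1=\check M D=-r$, $\widetilde r_1=\check M\widetilde D=-\widetilde r$).

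One remark on your sketch for \eqref{yj421}: the displayed identity as you wrote it is not quite right. What actually comes out of \eqref{k53}--\eqref{k54} is
\[
\varphi^{(\nu)}(x,\xi)\,\widetilde r(x,\lambda,\xi)=\frac{-P_{\nu+1,1}(x,\xi)\widetilde\varphi(x,\lambda)-P_{\nu+1,2}(x,\xi)\widetilde\varphi'(x,\lambda)+E_\nu(x,\lambda,\xi)}{\lambda-\xi},
\]
with $E_\nu$ entire in $\xi$; there is no extra $\varphi^{(\nu)}(x,\xi)$ in the numerator, and $\mathcal H_\nu$ cannot be entire unless you first verify that the entire numerator $E_\nu-\varphi^{(\nu)}$ vanishes at $\xi=\lambda$ (it need not). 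This does not break the argument --- after multiplying by $(\xi-\mu)^{-1}$ the $E_\nu/(\lambda-\xi)$ piece is handled by Cauchy's theorem on $\gamma_R^0$ using the growth of $E_\nu$, while the $P_{jk}$ pieces are handled by \eqref{yj417} and the partial fraction $[(\lambda-\xi)(\xi-\mu)]^{-1}=(\lambda-\mu)^{-1}[(\xi-\mu)^{-1}+(\lambda-\xi)^{-1}]$ --- but your bookkeeping should be adjusted accordingly.
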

\begin{proof}
The proof of \eqref{yj421} is the same as that in Lemma 4.2 in \cite{FY}. So, we omit it. One can also prove \eqref{px3} by changing the places of $L$ and $\tilde{L}$.
Indeed, if we define\begin{equation*} r_1(x,\lambda,\mu):=\check{M}(\mu)D(x,\lambda,\mu),\quad \widetilde{r}_1(x,\lambda,\mu):=\check{M}(\mu)\widetilde{D}(x,\lambda,\mu),\quad \check{M}=\widetilde{M}-M,\end{equation*} then we can also obtain \begin{equation*}   {r}_1(x,\lambda,\mu)-\widetilde{r}_1(x,\lambda,\mu)-\frac{1}{2\pi i}\int_\gamma  \widetilde{r}_1(x,\xi,\mu) {{r}}_1(x,\lambda,\xi)d\xi=0_n,\end{equation*} which is the same as \eqref{px3}.
\end{proof}

Let us consider the Banach space $X$ of matrix-valued functions, defined by
\begin{equation*}
  X=\{z(\lambda):z(\lambda) \text{ is continuous on $\gamma$ and satisfies that $z(\lambda)(A+i\rho A^\perp)$ is bounded on $\gamma$}  \},
\end{equation*}
%\begin{equation*} X_2=\{z_2(\lambda):z_2(\lambda) \text{ is continuous on $\gamma$ and satisfies $O(1/\rho)A^\perp$ as $\lambda \to \infty$}  \}. \end{equation*}
with the norm
 $$\|z\|_X:=\max_{\lambda\in\gamma}\|z(\lambda)(A+i\rho A^\perp)\|.
$$
\begin{theorem}
    For each fixed $x\ge 0$, the main equation \eqref{yj43} has a unique solution $\varphi(x,\cdot) \in X$.
\end{theorem}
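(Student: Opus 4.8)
The plan is to recast the main equation \eqref{yj43} as a linear operator equation in $X$ and to produce the inverse operator explicitly from Lemma \ref{th3}. For fixed $x\ge 0$, introduce the linear integral operators acting on matrix-valued functions on $\gamma$,
\begin{equation*}
(\widetilde{\mathcal R}z)(\lambda):=\frac{1}{2\pi i}\int_\gamma z(\mu)\,\widetilde r(x,\lambda,\mu)\,d\mu,\qquad
(\mathcal R z)(\lambda):=\frac{1}{2\pi i}\int_\gamma z(\mu)\, r(x,\lambda,\mu)\,d\mu,
\end{equation*}
so that \eqref{yj43} becomes $(E+\widetilde{\mathcal R})\,\varphi(x,\cdot)=\widetilde\varphi(x,\cdot)$, where $E$ is the identity operator on $X$. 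A routine computation from \eqref{k23} (multiply on the right by $A+i\rho A^\perp$ and use $A(A+i\rho A^\perp)=A$, $A^\perp(A+i\rho A^\perp)=i\rho A^\perp$, $h(A+i\rho A^\perp)=h$) shows that $\varphi(x,\lambda)(A+i\rho A^\perp)$ is continuous and bounded on $\gamma$ (recall $|{\rm Im}\rho|$ is bounded on $\gamma$, vanishes on $\gamma''$, and $\gamma$ stays away from $\rho=0$), so that $\varphi(x,\cdot)\in X$; likewise $\widetilde\varphi(x,\cdot)\in X$.

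Next I would prove that $\widetilde{\mathcal R}$ and $\mathcal R$ are bounded operators on $X$. For $z\in X$ and $\mu=\tau^2$, factor
\begin{equation*}
z(\mu)\,\widetilde r(x,\lambda,\mu)(A+i\rho A^\perp)
=\big[z(\mu)(A+i\tau A^\perp)\big]\,\big[(A+i\tau A^\perp)^{-1}\widetilde r(x,\lambda,\mu)(A+i\rho A^\perp)\big];
\end{equation*}
the first bracket has norm $\le\|z\|_X$. For the second bracket I would repeat the derivation of \eqref{yj44}, now also carrying along the right-hand factor $A+i\rho A^\perp$: combining \eqref{pp43}, \eqref{pp44}, \eqref{xj3}, the extra power of $\rho$ generated in the $A^\perp$-columns of $\widetilde D$ is absorbed by $A^\perp(A+i\rho A^\perp)=i\rho A^\perp$, which yields
\begin{equation*}
(A+i\tau A^\perp)^{-1}\widetilde r(x,\lambda,\mu)(A+i\rho A^\perp)=O\!\left(\frac{e^{(|{\rm Im}\tau|+|{\rm Im}\rho|)x}}{|\tau|^{2}\,|\rho\pm\tau|}\right),\qquad |\tau|\to\infty\ \text{in}\ \Omega,
\end{equation*}
uniformly in $\lambda\in\gamma$ (and the left-hand side is continuous and bounded on the rest of $\gamma$). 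Since $|{\rm Im}\tau|$, $|{\rm Im}\rho|$ are bounded on $\gamma$ and $|d\mu|=2|\tau|\,|d\tau|$, the integral $\int_\gamma|\tau|^{-2}|\rho\pm\tau|^{-1}|d\mu|$ converges uniformly for $\lambda\in\gamma$; hence $(\widetilde{\mathcal R}z)(\cdot)(A+i\rho A^\perp)$ is continuous and bounded on $\gamma$ with $\|\widetilde{\mathcal R}z\|_X\le C(x)\|z\|_X$. The estimate for $r$ is analogous, so $\mathcal R$ is bounded on $X$ as well.

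It then remains to invert $E+\widetilde{\mathcal R}$. Interchanging the order of integration (justified by absolute convergence, which follows from the same estimates), Lemma \ref{th3} gives the operator identities
\begin{equation*}
\widetilde{\mathcal R}\,\mathcal R=\widetilde{\mathcal R}-\mathcal R\quad(\text{from }\eqref{yj421}),\qquad
\mathcal R\,\widetilde{\mathcal R}=\widetilde{\mathcal R}-\mathcal R\quad(\text{from }\eqref{px3}),
\end{equation*}
whence $(E+\widetilde{\mathcal R})(E-\mathcal R)=(E-\mathcal R)(E+\widetilde{\mathcal R})=E$. Thus $E+\widetilde{\mathcal R}$ is boundedly invertible on $X$, with inverse $E-\mathcal R$, so the main equation $(E+\widetilde{\mathcal R})\varphi(x,\cdot)=\widetilde\varphi(x,\cdot)$ has exactly one solution in $X$, namely $(E-\mathcal R)\widetilde\varphi(x,\cdot)$; since by Theorem \ref{th2} the function $\varphi(x,\cdot)$ of $L$, which lies in $X$, satisfies \eqref{yj43}, this unique solution is precisely $\varphi(x,\cdot)$.

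The main obstacle is the boundedness step: one has to track the $A/A^\perp$ block structure of $\widetilde D$, $\hat{M}$ and $\widetilde r$ precisely enough that the power of $\rho$ introduced by the right factor $A+i\rho A^\perp$ is exactly compensated, and to verify that the resulting bound remains integrable along $\gamma$ near the branch point, where $\rho\pm\tau$ may be small---there is no genuine singularity there, since $\sin(\rho\pm\tau)x/(\rho\pm\tau)$ is entire and $O\!\big(\min(x,|\rho\pm\tau|^{-1})\,e^{|{\rm Im}(\rho\pm\tau)|x}\big)$. Once this estimate and its $r$-analogue are established, the remaining invertibility-and-uniqueness argument is purely algebraic.
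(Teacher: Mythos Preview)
Your proof is correct and follows essentially the same approach as the paper: define the two integral operators $E+\widetilde{\mathcal R}$ and $E-\mathcal R$ (the paper calls them $\widetilde{D}$ and $D$), show they are bounded on $X$, and use the two identities of Lemma~\ref{th3} together with Fubini's theorem to check that they are mutual inverses, so that \eqref{yj43} has a unique solution in $X$. Your write-up is in fact more explicit than the paper's, which simply says ``Using \eqref{px18}, it is easy to verify that $D$ and $\tilde D$ are bounded linear operators on $X$'' and does not separately record that $\varphi(x,\cdot),\widetilde\varphi(x,\cdot)\in X$ or discuss the harmlessness of the factor $|\rho\pm\tau|^{-1}$ on $\gamma$.
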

\begin{proof}
    Define the maps:
    \begin{equation*}
        \widetilde{D}z(\lambda)=z(\lambda)+\frac{1}{2\pi i}\int_{\gamma}z(\mu )\widetilde{r}(x,\lambda,\mu )d\mu,
    \end{equation*}
    \begin{equation*}
       Dz(\lambda)=z(\lambda)-\frac{1}{2\pi i}\int_{\gamma}z(\mu )r(x,\lambda,\mu )d\mu.
    \end{equation*}
   Using \eqref{px18}, it is easy to verify that $D$ and $\tilde{D}$ are bounded linear operators on $X$. %, namely, \begin{equation*}\int_{\gamma}z(\mu )\widetilde{r}(x,\lambda,\mu )d\mu (A+\rho A^\perp)=O(1),\quad \int_{\gamma}z(\mu )r(x,\lambda,\mu )d\mu (A+\rho A^\perp)=O(1),\quad \lambda\to+\infty. \end{equation*}
    Then
        \begin{align}\label{px14}
\notag \widetilde{D}Dz(\lambda)&=\widetilde{D}\left (z(\lambda)-\frac{1}{2\pi i}\int_{\gamma}z(\mu )r(x,\lambda,\mu )d\mu\right)\\
\notag=&z(\lambda) +\frac{1}{2\pi i} \int_{\gamma} z(\mu) \widetilde{r}(x,\lambda,\mu )d\mu-\widetilde{D}\left(\frac{1}{2\pi i} \int_{\gamma}z(\mu )r(x,\lambda ,\mu )d\mu\right)\\
\notag=&z(\lambda)+\frac{1}{2\pi i}\int_{\gamma}z(\mu )[\widetilde{r}(x,\lambda,\mu )-{r}(x,\lambda,\mu )]d\mu \\
&\quad \quad -\frac{1}{2\pi i}\int _{\gamma}\left[\frac{1}{2\pi i}\int _{\gamma}z(\mu )r(x,\xi,\mu )d\mu\right]\widetilde{r}(x,\lambda,\xi )d\xi  .
        \end{align}
For each $z\in X$, using \eqref{yj44} and \eqref{pp43}, we have
    \begin{equation*}
        \int_{\gamma} \int_{\gamma}\| z(\mu)r(x,\xi ,\mu)\widetilde{r}(x,\lambda,\xi )\| d \mu d\xi  < +\infty.
    \end{equation*}
 Due to Fubini's theorem,    we have
    \begin{equation*}
    \int _{\gamma}\int _{\gamma}z(\mu )r(x,\xi,\mu )d\mu\widetilde{r}(x,\lambda,\xi )d\xi = \int _{\gamma}\int _{\gamma}z(\mu )r(x,\xi,\mu )\widetilde{r}(x,\lambda,\xi) d\xi d\mu.
    \end{equation*}
It follows from \eqref{px14} that
    \begin{equation}\label{yj423}
       \widetilde{D}Dz(\lambda)=z(\lambda) +\frac{1}{2\pi i} \int_{\gamma}z(\mu)\left[\widetilde{r}(x,\lambda,\mu)-r(x,\lambda,\mu)-\frac{1}{2\pi i}\int_{\gamma}r(x,\xi,\mu)\widetilde{r}(x,\lambda,\xi )d\xi \right]d\mu,
    \end{equation}
    which implies from \eqref{yj421} that $\widetilde{D}Dz(\lambda) =z(\lambda).$
 Similarly, using \eqref{px3}, we also have $D\widetilde{D}z(\lambda) =z(\lambda).$
Hence the operator $\widetilde{D}$ has a bounded inverse operator, and so the main equation \eqref{yj43} is uniquely solvable for each fixed $x\ge 0$.
\end{proof}

From the solvability of the main equation \eqref{yj43}, we can get a theoretical reconstruction algorithm for Inverse Problem \ref{ip1}.

\begin{algorithm}Let the matrix-function $M(\lambda)$  be given.\\
  Step 1. Obtain $A$ by \eqref{px2}.
  \\
  Step 2. Choose ${L}(\widetilde{Q},A,\widetilde{h})$ and calculate $\widetilde{\varphi}(x,\lambda)$ and $\widetilde{r}(x,\lambda,\mu)$.
\\
   Step 3.   Find $\varphi(x,\lambda)$ by solving the main equation \eqref{yj43}.
\\
   Step 4.   Get $Q(x)$  and $h$ via
       $Q(x) = \varphi''(x,\lambda)(\varphi(x,\lambda))^{-1} - \lambda I_{n}$,
   $h = \varphi'(0,\lambda) - iA^{\perp}$.
\end{algorithm}

\noindent\textbf{Data availability.}
Data sharing is not applicable to this article as no new data were created or analyzed in this study.
\\[2mm]
%\noindent {\bf Declarations.} No conflict of interest exits in the submission of this manuscript. The work was original research that has not been published previously, and not under consideration for publication elsewhere, in whole or in part. All the authors listed have approved the manuscript.\\[2mm]

\noindent\textbf{\large Appendix}
\\[-2mm]

\appendix
\setcounter{equation}{0}
\renewcommand\theequation{A.\arabic{equation}}

In Appendix, we analyse how to transform the condition \eqref{xc1} with \eqref{xc2} into the form of \eqref{k2}. Since $U$ is a unitary matrix, there exists a unitary matrix $N$ such that
\begin{equation*}
N^\dagger U N ={\rm diag} \left\{-e^{-2i\theta_1},\cdot\cdot\cdot ,-e^{-2i\theta_r},-I_{n-r} \right\}.
\end{equation*}
where $\theta_j\in (0,\pi)$, $j=1,...,r$.
It follows from \eqref{xc2} that
 \begin{align}
 \notag N^\dag A_1N&={\rm diag}\left\{\frac{1-e^{-2i\theta_1}}{2},\cdot\cdot\cdot,\frac{1-e^{-2i\theta_{r}}}{2},0_{n-r}\right\},\\
 \notag N^\dag B_1N&={\rm diag}\left\{\frac{(e^{-2i\theta_1}+1)}{2i},\cdot\cdot\cdot,\frac{(e^{-2i\theta_{r}}+1)}{2i},-iI_{n-r}\right\}.
 \end{align}
Denote
\begin{equation*}
A_0:=N {\rm diag} \left\{\frac{2}{1-e^{-2i\theta_1}},\cdot\cdot\cdot ,\frac{2}{1-e^{-2i\theta_r}},iI_{n-r} \right\}N^\dagger,
\quad
A:=A_1A_0 , \quad B:= B_1A_0.
\end{equation*}
Then $A_0$ is invertible and $A,B$ have the following forms, respectively,
\begin{equation}\label{yj111}
 A =N  {\rm diag} \left\{I_{r} , 0_{n-r}\right\}N^\dagger ,\quad B=-N{\rm diag} \left\{\cot \theta_1,\cdot\cdot\cdot ,\cot \theta_r,I_{n-r} \right\}N^\dagger.
\end{equation}
Note that the form of $A$ in \eqref{yj111} is equivalent to that $A$ is an orthogonal projection matrix, i.e., $A^\dagger =A=A^2$. We can also rewrite  $B$ as the form
\begin{equation*}\label{yj112}
B=-A^\perp +h,\quad h:= -N {\rm diag} \left\{\cot \theta_1,\cdot\cdot\cdot ,\cot \theta_r , 0_{n-r}\right\} N^\dagger
\end{equation*}
where $A^\perp=I-A$, and $h$  satisfies $Ah=hA=AhA=h=h^\dagger$.
Premultiplying \eqref{xc1} by the invertible matrix $A_0^\dagger$, and get the boundary condition \eqref{k2}.
%The condition \eqref{k2} becomes the Robin boundary condition \eqref{xj1} when $A=I_n$, and becomes the Dirichlet boundary condition \eqref{xj0} when $A=0_n$.


\begin{thebibliography}{99}
\bibitem{AM} Z.S. Agranovich, V.A. Marchenko, The Inverse Problem of Scattering Theory, Gordon and Breach, New York,
1963.
\bibitem{AWD} T. Aktosun, R. Weder, Direct and Inverse Scattering for the Matrix Schr\"{o}dinger Equation,  Springer, Cham, 2021.

\bibitem{AKW} T. Aktosun, M. Klaus, R. Weder, Small-energy analysis for the self-adjoint matrix Schr\"{o}dinger operator on the half
line, J. Math. Phys. 52 (2011), 102101.
\bibitem{AKW1} T. Aktosun, M. Klaus, R. Weder, Small-energy analysis for the self-adjoint matrix Schr\"{o}dinger operator on the half
line. II, J. Math. Phys. 55 (2014), 032103.
\bibitem{AW} T. Aktosun, R. Weder, High-energy analysis and Levinson's theorem for the self-adjoint matrix Schr\"{o}dinger operator
on the half line, J. Math. Phys. 54 (2013), 012108.
\bibitem{AW1} T. Aktosun, R. Weder, Inverse scattering on the half line for the
matrix Schrodinger equation, J. Math. Phy. Ana. Geo. 14 (2018), 273-269.
\bibitem{AKW0} T. Aktosun, M. Klaus,  C. van der Mee, Small-energy asymptotics of the scattering matrix for the matrix Schr\"{o}dinger equation on the line, J. Math. Phys. 42 (2001), 4627.
\bibitem{BCFK} G. Berkolaiko, R. Carlson, S.A. Fulling, P. Kuchment (eds.), Quantum Graphs and Their Applications, Contemporary Mathematics 415, American Mathematical Society, Providence, 2006.
%\bibitem{BFU} N. Bondarenko,  G. Freiling, G. Urazboev, Integration of the matrix KdV equation with self-consistent source, Chaos Solitons \& Fractals, 49 (2013), 21-27
\bibitem{NB2} N. Bondarenko, An inverse spectral problem for the matrix Sturm-Liouville operator on the half-line, Boundary Value Problems 15 (2015), (22pp).
\bibitem{NB} N. Bondarenko, Inverse scattering on the line for the matrix Sturm-Liouville equation, J. Differential Equations 262 (2017), 2073-2105.
%\bibitem{NB1} N. Bondarenko, Recovery of the matrix quadratic differential pencil from the spectral data, J. Inverse Ill-Posed Probl. 24 (2016), 245-263.
\bibitem{NB0} N. Bondarenko, Direct and inverse problems for the matrix Sturm-Liouville operator with general self-adjoint boundary conditions, Mathematical Notes 109 (2021), 358-378.
\bibitem{NB3} N. Bondarenko, Constructive solution of the inverse spectral problem for the matrix Sturm-Liouville operator, Inverse Probl. Sci. Eng. 28 (2020), 1307-1330.
\bibitem{NB4} N. Bondarenko, Inverse problem solution and spectral data characterization for the matrix Sturm-Liouville operator with singular potential, Anal. Math. Phys. 11 (2021),  Paper No. 145, 26 pp.

%\bibitem{BR} E.I. Bondarenko, F.S. Rofe-Beketov, Inverse scattering problem on the semiaxis for the system with the triangle matrix potential, Mat. Fiz. Anal. Geom. 10 (2003), 412-424.

\bibitem{SBF} S.A. Buterin, G. Freiling, Inverse spectral-scattering problem for the Sturm-Liouville operator on a noncompact star-type graph, Tamkang Journal of Mathematics 44 (2013), 327-349.

\bibitem{RC} R. Carlson, An inverse problem for the matrix Schr\"{o}dinger equation, J. Math. Anal. Appl. 267 (2002), 564-575.
%\bibitem{CD}  F. Calogero, A. Degasperis, Nonlinear evolution equations solvable by the inverse spectral transform II, Nuovo Cimento B 39 (1) (1977).
\bibitem{CK} D. Chelkak, E. Korotyaev,  Parametrization of the isospectral set
for the vector-valued Sturm-Liouville problem,  Journal of Functional Analysis 241 (2006), 359-373.
\bibitem{CK1} D. Chelkak, E. Korotyaev, Weyl-Titchmarsh functions of vector-valued
Sturm-Liouville operators on the unit interval, Journal of Functional Analysis 257 (2009), 1546-1588.
\bibitem{Dv} F. Demontis, C. van der Mee, From the AKNS system to the matrix Schr\"{o}dinger equation with vanishing potentials: Direct and inverse problems, Stud. Appl. Math. 150 (2023), 481-519.
 \bibitem{EKKST} P. Exner, J.P. Keating, P. Kuchment, T. Sunada, A. Teplyaev (eds.), Analysis on Graphs and Its Applications, Proceedings of Symposia in Pure Mathematics Vol. 77, American Mathematical Society, Providence, RI, 2006.
 \bibitem{FY} G. Freiling, V.A. Yurko, An inverse problem for the non-selfadjoint matrix Sturm-Liouville equation on the half-line, J. Inverse Ill-Posed Probl. 15 (2007), 785-798.
 % \bibitem{FY1}    G. Freiling, V.A. Yurko, Inverse Sturm-Liouville Problems and Their Applications, NOVA Science Publishers, New York, 2001.
%\bibitem{GKM} F. Gesztesy, A. Kiselev,  K.A. Makarov, Uniqueness results for matrix-valued Schr\"{o}dinger, Jacobi, and Dirac-type operators. Math. Nachr. 239/240 (2002), 103-145.
\bibitem{MH1} M.S. Harmer, Inverse scattering for the matrix Schr\"{o}dinger operator and Schr\"{o}dinger operator on graphs with general
self-adjoint boundary conditions, ANZIAM J. 44 (2002), 161-168.
%\bibitem{MH2} M.S. Harmer, The matrix Schr\"{o}dinger operator and Schr\"{o}dinger operator on graphs, Ph.D. dissertation, University of Auckland, New Zealand, 2004.
\bibitem{MH3} M.S. Harmer, Inverse scattering on matrices with boundary conditions, J. Phys. A: Math. Gen. 38 (2005) 4875-4885.
\bibitem{KS} V. Kostrykin, R. Schrader, Kirchhoff's rule for quantum wires, J. Phys. A: Math. Gen. 32 (1999), 595-630.
\bibitem{KS1} V. Kostrykin, R. Schrader, Kirchhoff's rule for quantum wires. II: The inverse problem with possible applications to
quantum computers, Fortschr. Phys. 48 (2000), 703-716.
\bibitem{KS2} P. Kurasov, F. Stenberg, On the inverse scattering problem on branching graphs, J. Phys. A 35 (2002), 101-121.
\bibitem{PK} P. Kuchment, Quantum graphs. I. Some basic structures, Waves Random Media 14 (2004), S107-S128.
\bibitem{PK1} P. Kuchment, Quantum graphs. II. Some spectral properties of quantum and combinatorial graphs, J. Phys. A 38 (2005),
4887-4900.
%\bibitem{BL} B. M. Levitan, Inverse Sturm-Liouville Problems. VNU Science Press, Utrecht, 1987.
%\bibitem{LM} D.Sh. Lundina, V. A. Marchenko, A refinement of the inequalities which characterize the stability of the inverse problem of scattering theory, Mat. Sb. (N.S.) 78 (1969), 475-484.    (Engl. transl. 1969 Math. USSR Sbornik 7, 467-476).
%\bibitem{VM1} V. Marchenko, Stability of the inverse problem in scattering theory, Mat. Sbornik 77 (1968), 139-162. (Engl. transl. 1968 Math. USSR Sbornik 6, 125-148).
%\bibitem{VM}  V. Marchenko, Sturm-Liouville Operators and Applications. Revised Edition, AMS, Providence, 2011.
\bibitem{NW} I. Naumkin, R. Weder, The matrix nonlinear Schr\"{o}dinger equation with a potential, J. Math. Pures Appl. 172 (2023), 1-104.
\bibitem{Sh} C.-T. Shieh, Isospectral sets and inverse problems for vector-valued Sturm-Liouville equations, Inverse Probl. 23 (2007), 2457-2468.
%\bibitem{EO}   E. Olmedilla, Inverse scattering transform for general matrix Schr\"{o}dinger operators and the related symplectic structure, Inverse Probl. 1 (1985) 219-236.
%\bibitem{AR} A. G. Ramm, Inverse scattering on the half-line revisited, Reports on Math. Physics, 76 (2015), 159-169.
%\bibitem{WK} M. Wadati, T. Kamijo, On the extension of inverse scattering method, Progr. Theoret. Phys. 52 (1974), 397-414.
%\bibitem{W} R. Weder, Scattering theory for the matrix Schr\"{o}dinger operator on the half line with general boundary conditions, J. Math. Phys. 56 (2015), 092103.
% \bibitem{X} X.-C. Xu, Stability of direct and inverse scattering problems for the self-adjoint Schr\"{o}dinger operators on the half-line, J. Math. Anal. Appl. 501 (2021), 125217.
 \bibitem{XY} X.-C. Xu, C.-F. Yang,   Determination of the self-adjoint matrix Schr\"{o}dinger operators without the bound state data, Inverse Probl. 34 (2018) 065002 (20pp).
 \bibitem{XB} X.-C. Xu, N.P. Bondarenko, Stability of the inverse scattering problem for the self-adjoint matrix Schr\"{o}dinger operator on the half line. Stud. Appl. Math. 149 (2022), 815-838.
 \bibitem{X0} X.-C. Xu, Inverse spectral problem for the matrix Sturm-Liouville operator with the general separated self-adjoint boundary conditions, Tamkang Journal of Mathematics 50 (2019), 321-336.
\bibitem{CY} C.-F. Yang, Trace formula for the matrix Sturm-Liouville operator, Anal. Math. Phys. 6 (2016), 31-41.
\bibitem{VY} V.A. Yurko, Inverse problems for the matrix Sturm-Liouville equation on a finite interval. Inverse Probl. 22 (2006), 1139-1149.
\bibitem{ZS} B.N. Zakhariev, A.A. Suzko,  Direct and inverse problems: potentials in quantum scattering. Springer,
Berlin, 1990.
%\bibitem{ZR} E.I. Zubkova, F.S. Rofe-Beketov, Inverse scattering problem on the axis for the Schr\"{o}dinger operator with triangular $2\times2$ matrix potential. I. Main theorem, Zh. Mat. Fiz. Anal. Geom. 3 (2007), 47-60.
%\bibitem{ZR1} E.I. Zubkova, F.S. Rofe-Beketov, Inverse scattering problem on the axis for the Schr\"{o}dinger operator with triangular $2\times2$  matrix potential. II. Addition of the discrete spectrum., Zh. Mat. Fiz. Anal. Geom. 3 (2007), 176-195.
%\bibitem{ZR2}  E.I. Zubkova, F.S. Rofe-Beketov,  Necessary and sufficient conditions in inverse scattering problem on the axis for the triangular $2\times2$  matrix potential, Zh. Mat. Fiz. Anal. Geom. 5 (2009),  296-309.
\end{thebibliography}
\end{document}